 \numberwithin{equation}{section}
\theoremstyle{plain}
\newtheorem{thm}{Theorem}[section]
\newtheorem{cor}[thm]{Corollary}
\newtheorem{lem}[thm]{Lemma}
\theoremstyle{definition}
\theoremstyle{remark}
\newtheorem{rem}[thm]{Remark}
\newcommand{\N}{\mathbb{N}}
\newcommand{\R}{\mathbb{R}}
\newcommand{\bp}{\begin{proof}[\ensuremath{\mathbf{Proof}}]}
\newcommand{\bs}{\begin{proof}[\ensuremath{\mathbf{Solution}}]}
\newcommand{\ep}{\end{proof}}
\newcommand{\be}{\begin{equation}}
\newcommand{\ee}{\end{equation}}
\begin{document}

\title{Two critical times for the SIR model}

\author{Ryan Hynd\footnote{Department of Mathematics, University of Pennsylvania. }\;, Dennis Ikpe\footnote{Department of Statistics and Probability, Michigan State University. African Institute for Mathematical Sciences, South Africa.}\;, and Terrance Pendleton\footnote{Department of Mathematics, Drake University.}\;}

\maketitle 

\begin{abstract} 
We consider the SIR model and study the first time the number of infected individuals begins to decrease and the first time this population is below a given threshold. We interpret these times as functions of the initial susceptible and infected populations and characterize them as solutions of a certain partial differential equation. This allows us to obtain integral representations of these times and in turn to estimate them precisely for large populations.  
\end{abstract}

\section{Introduction}
The susceptible, infected, and recovered (SIR) model in epidemiology involves the system of ODE 
\be\label{SIR}
\begin{cases}
\dot S=-\beta S I\\
\dot I=\beta SI -\gamma I.
\end{cases}
\ee 
Here $S,I: [0,\infty)\rightarrow [0,\infty)$ denote the susceptible and infected compartments of a given population in the presence of an infectious disease. If $N$ is the size of the population, then 
$$
R(t)=N-S(t)-I(t)
$$
is the recovered compartment of the population at time $t$. The parameters $\beta>0$ and $\gamma>0$ are the  infected and recovery rates per unit time, respectively. 

\par We note that for given initial conditions $S(0), I(0)\ge 0$, there is a unique solution of the SIR ODE. Indeed, a local solution pair $S,I: [0,T)\rightarrow \R$ of \eqref{SIR} exists by standard ODE theory (see for example section I.2 \cite{MR587488}). As 
\be\label{ExpFormulae}
S(t)= S(0)e^{\displaystyle-\beta \int^t_0I(\tau)d\tau}\;\text{and}\;\; I(t)=I(0)e^{\displaystyle \int^t_0(\beta S(\tau)-\gamma)d\tau}
\ee
for $t\in [0,T)$, $S$ and $I$ are nonnegative. By \eqref{SIR}, $\frac{d}{dt}(S+I)(t)=-\gamma I(t)\le 0$. Therefore,
$$
S(t)+I(t)\le S(0)+I(0)
$$
for all $t\in[0,T)$.  As a result, $S$ and $I$ are uniformly bounded. Consequently it is possible to continue this solution to all of $[0,\infty)$ (section I.2 \cite{MR587488}).  And in view of \eqref{ExpFormulae}, $S(t)$ is positive for all $t>0$ provided $S(0)>0$ and likewise for $I(t)$.

\par Let us recall three important properties about each solution $S,I: [0,\infty)\rightarrow (0,\infty)$ of the SIR ODE.  For more details on these facts, 
we refer the reader to section 2.2 of \cite{MR3409181} and section 9.2 of \cite{MR3969982}. 
\\\\
\noindent {\bf Integrability}. Although $S$ and $I$ are not explicitly known, they are integrable in sense that 
\be\label{LevelSetFun}
S(t)+I(t)-\frac{\gamma}{\beta}\ln S(t)=S(0)+I(0)-\frac{\gamma}{\beta}\ln S(0)
\ee
for each time $t\ge 0$. That is, the path $t\mapsto (S(t),I(t))$ belongs to a level set of the function 
$$
\psi(x,y):=x+y-(\gamma/\beta)\ln x.
$$
In particular, we may consider $I$ as a function of $S$.  
\\\\
\noindent {\bf Decay of  infected individuals}.
The number of infected individuals tends to $0$ as $t\rightarrow\infty$ 
\be\label{ItendtoZero}
\lim_{t\rightarrow\infty}I(t)=0.
\ee
In particular, for any given threshold $\mu>0$, there is a finite time $t$ such that the number of infected individuals falls below $\mu$
$$
I(t)\le \mu. 
$$
In what follows, we will write $u\ge 0$ for the first time in which $I$ falls below $\mu$. 
\\\\
\noindent {\bf Infected individuals eventually decrease}. There is a time $v\ge 0$ for which 
$$
\text{$I$ is decreasing on $[v,\infty)$.}
$$Since $I$ is a positive function and $\dot I(t)=(\beta S(t)-\gamma)I(t)$, $v$ can be taken to be the first time $t$ that the number of susceptible individuals falls below the ratio of the recovery and infected rates
\be\label{Slessthangammaoverbeta}
 S(t)\le \frac{\gamma}{\beta}.
\ee
\begin{figure}[h]
\centering
\includegraphics[width=.8\textwidth]{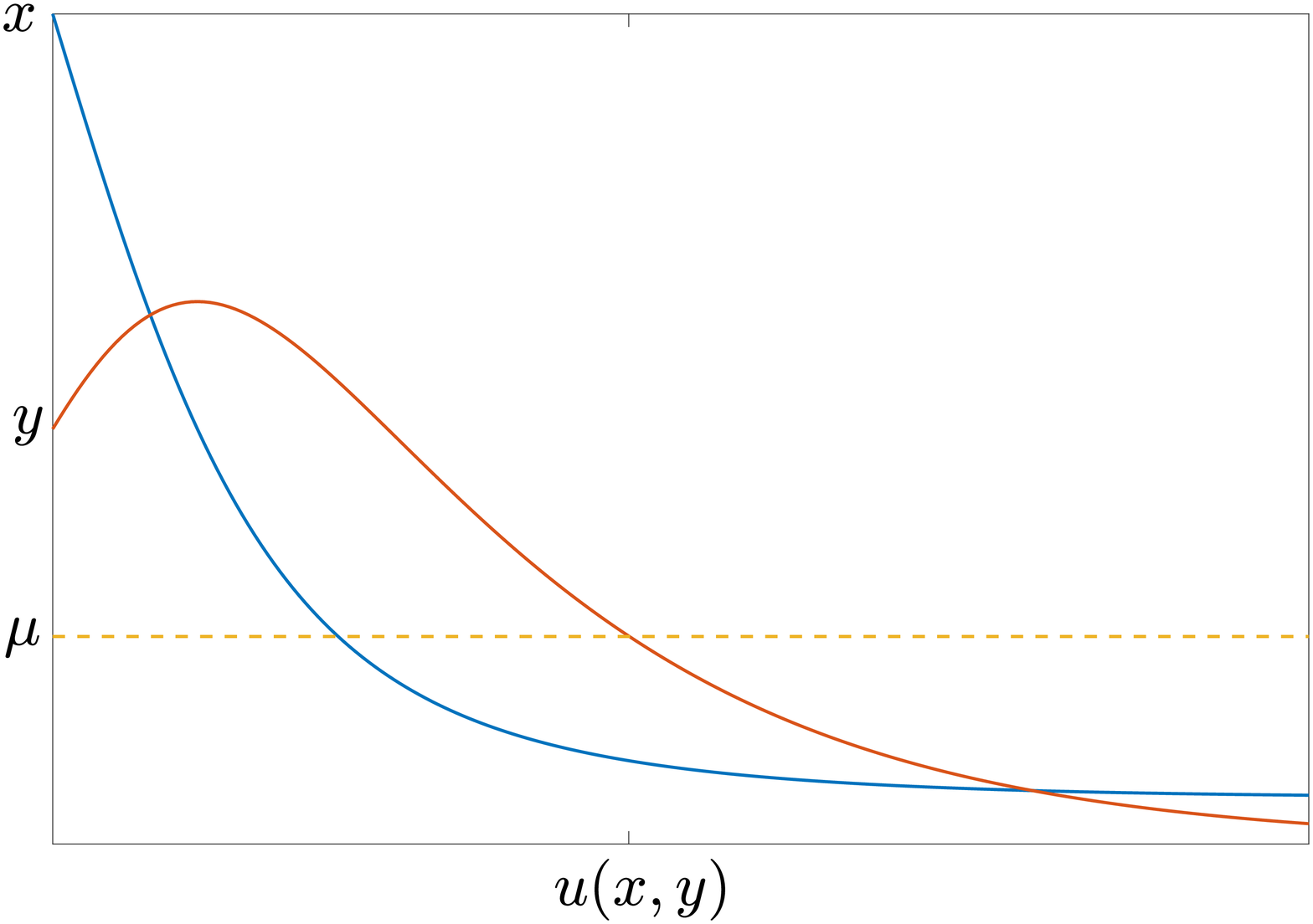}
 \caption{Plot of the solution pair $S,I$ of \eqref{SIR} with $S(0)=x$ and $I(0)=y$. $S(t)$ is shown in blue and $I(t)$ is shown in red. Note that $u(x,y)$ is the first time $t$ such that $I(t)\le \mu$. }
\end{figure}
\par In this note, we will study the times $u$ and $v$ mentioned above as functions of the initial conditions $S(0)$ and $I(0)$. First, we will fix a threshold
$$
\mu>0
$$
and consider
\be\label{uExitTimeFun}
u(x,y)
:=\min\{t\ge 0: I(t)\le \mu\}
\ee
for $x,y\ge 0$. Here $S, I$ are solutions of \eqref{SIR} with $S(0)=x$ and $I(0)=y$. As $u(x,y)=0$ when $0\le y< \mu$, we will focus on the values of $u(x,y)$ for $x\ge 0$ and $y\ge \mu$.

\par We'll see that $u$ is a smooth function on $(0,\infty)\times(\mu,\infty)$ which satisfies the PDE
\be\label{babyHJB}
\beta xy \partial_xu+(\gamma-\beta x)y\partial_yu=1
\ee
and boundary condition
\be\label{uBC}
u(x,\mu)=0,\; x\in [0,\gamma/\beta].
\ee
Moreover, we will also use $\psi$ to write a representation formula for $u$. To this end, we note that for $x> 0,y\ge \mu$ and there is a unique $a(x,y)\in (0,\gamma/\beta]$ such that 
$$
\psi(x,y)=\psi(a(x,y),\mu).
$$ 
Further, $a(x,y)<x$ when $x>\gamma/\beta$ or $y>\mu$. These fact follows easily from the definition of $\psi$; Figure \ref{LevelSetPlot} below also provides a schematic.

\par A basic result involving $u$ is as follows. 

\begin{thm}\label{uthm}
The function $u$ defined in \eqref{uExitTimeFun} has the following properties. 

\begin{enumerate}[(i)]

\item $u$ is continuous on $[0,\infty)\times[\mu,\infty)$ and is smooth in $(0,\infty)\times(\mu,\infty)$. 

\item $u$ is the unique solution of \eqref{babyHJB} in $(0,\infty)\times(\mu,\infty)$ which satisfies the boundary condition \eqref{uBC}.

\item  For each $x>0$ and $y\ge \mu$,
 \be\label{RepFormulaForU}
u(x,y)=\int^x_{a(x,y)}\frac{dz}{\beta z\left((\gamma/\beta)\ln z-z +\psi(x,y)\right)}.
\ee
\end{enumerate}
 \end{thm}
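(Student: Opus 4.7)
The plan is to prove the three parts in the order (iii), (i), (ii), since the representation formula gives the quickest route to both smoothness and the PDE. First I would derive \eqref{RepFormulaForU} by reparametrizing the trajectory by $S$. Fix $x > 0$ and $y \ge \mu$, let $(S, I)$ solve \eqref{SIR} with $S(0) = x$, $I(0) = y$, and note that $\dot S = -\beta S I < 0$ on $[0, u(x,y)]$, so $t \mapsto S(t)$ is a smooth decreasing diffeomorphism of $[0, u(x,y)]$ onto $[S(u(x,y)), x]$. By continuity and minimality of $u$, $I(u(x,y)) = \mu$, and then \eqref{LevelSetFun} gives $\psi(S(u(x,y)), \mu) = \psi(x, y)$. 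I would also verify $S(u(x,y)) \le \gamma/\beta$: for $x \le \gamma/\beta$ this is immediate since $S$ is non-increasing, while for $x > \gamma/\beta$ the relation $\dot I = (\beta S - \gamma) I$ forces $I$ to be increasing while $S > \gamma/\beta$, so $I$ cannot first hit $\mu$ until after $S$ crosses $\gamma/\beta$. This pins down $S(u(x,y)) = a(x, y)$. Solving \eqref{LevelSetFun} for $I$ along the trajectory yields $I = (\gamma/\beta) \ln S - S + \psi(x, y)$, and the change of variables $z = S(t)$ in $u(x, y) = \int_0^{u(x,y)} dt$ with $dt = -dz / (\beta z I(z))$ produces \eqref{RepFormulaForU}.

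With the formula in hand, (i) reduces to routine checks. I would show that $a$ is smooth on $(0, \infty) \times (\mu, \infty)$ by the implicit function theorem applied to $\psi(a, \mu) = \psi(x, y)$, using $\partial_a \psi(a, \mu) = 1 - \gamma/(\beta a) \ne 0$ whenever $a < \gamma/\beta$, and noting that strict inequality $a(x,y) < \gamma/\beta$ holds throughout the open region (equality would force $(x, y) = (\gamma/\beta, \mu)$). The integrand in \eqref{RepFormulaForU} is smooth and strictly positive on $[a(x,y), x]$: its denominator equals $\beta z \cdot I(z)$ with $I(z) > 0$ throughout, including the endpoint values $I(a) = \mu$ and $I(x) = y$. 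Differentiation under the integral then gives $u \in C^\infty((0, \infty) \times (\mu, \infty))$. Continuity up to the boundary splits into three cases: on $\{y = \mu,\ 0 < x \le \gamma/\beta\}$, $a(x, \mu) = x$ and the integral collapses to $0$; on $\{y = \mu,\ x > \gamma/\beta\}$ the formula remains continuous since $a$ and the integrand do; and at $x = 0$ the explicit solution $S \equiv 0$, $I(t) = y e^{-\gamma t}$ gives $u(0, y) = \gamma^{-1} \ln(y/\mu)$, with classical continuous dependence of \eqref{SIR} on initial data closing the argument.

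For (ii), I would verify the PDE directly from the hitting-time interpretation. The semigroup property of \eqref{SIR} gives $u(S(t), I(t)) = u(x, y) - t$ for $t \in [0, u(x,y)]$; differentiating at $t = 0$ and substituting $\dot S(0) = -\beta x y$ and $\dot I(0) = (\beta x - \gamma) y$ yields \eqref{babyHJB}. For uniqueness, let $w$ be any smooth solution of \eqref{babyHJB} on $(0, \infty) \times (\mu, \infty)$ that extends continuously to the boundary and satisfies \eqref{uBC}. The characteristic vector field of \eqref{babyHJB} at $(x, y)$ is $(\beta x y, (\gamma - \beta x) y) = -(\dot S, \dot I)$, so along any forward SIR trajectory starting at $(x, y)$, $\frac{d}{dt} w(S(t), I(t)) = -1$. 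Integrating from $0$ to $u(x, y)$ and using $w(a(x,y), \mu) = 0$ gives $w(x, y) = u(x, y)$.

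The main subtlety I anticipate is continuity of $u$ at the corner $(\gamma/\beta, \mu)$, where $\partial_a \psi(a, \mu) = 0$ and the implicit function argument for $a$ just barely fails. The representation formula still handles this point: $a(x,y) \to \gamma/\beta$ and $x \to \gamma/\beta$ as $(x, y) \to (\gamma/\beta, \mu)$, the integrand stays uniformly bounded near $z = \gamma/\beta$ (approaching $1/(\gamma \mu)$), and the shrinking interval of integration forces $u(x, y) \to 0$. Tracking this limit cleanly, together with the matching of the three boundary pieces in part (i), is the place where I expect real care to be needed.
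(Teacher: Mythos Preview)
Your proof is correct but takes a genuinely different route from the paper. The paper proves the parts in the order (i), (ii), (iii): for (i) it applies the implicit function theorem directly to the smooth flow map $\Phi_2(x,y,t)=I(t)$ at the hitting time (after a separate lemma showing $\dot I(u(x,y))<0$), and establishes continuity by a compactness argument based on the a priori bound $u(x,y)\le (x+y)/(\gamma\mu)$ together with continuous dependence on initial data; only then does it derive the PDE in (ii), and finally obtains (iii) by differentiating $u$ along the level curve $z\mapsto(z,(\gamma/\beta)\ln z - z + \psi(x,y))$ using the PDE already in hand. You invert this: you get (iii) first by the direct change of variables $z=S(t)$ in $\int_0^{u(x,y)}dt$, then read off smoothness and continuity from the explicit integral, applying the implicit function theorem to $\psi(a,\mu)=\psi(x,y)$ rather than to the flow. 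Your route is more self-contained and avoids invoking smooth dependence of the flow on initial conditions, at the price of the boundary analysis in (i) becoming piecemeal---whereas the paper's flow-based argument handles continuity on all of $[0,\infty)\times[\mu,\infty)$, including the edge $x=0$ and the corner $(\gamma/\beta,\mu)$ you correctly flag, in a single uniform stroke.
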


\par Next, we will study 
\be\label{vExitTimeFun}
v(x,y):=\min\{t\ge 0: S(t)\le \gamma/\beta\}
\ee
for $x\ge 0$ and $y\ge0$.  Above, we are assuming that $S, I$ is the solution pair of the SIR ODE \eqref{SIR} with $S(0)=x$ and $I(0)=y$. Note that $v(x,y)$ records the first time $t$ that $I(t)$ starts to decrease. Since 
\be
\begin{cases}
v(x,y)=0\;\text{ for $0\le x\le \gamma/\beta$ and}\\\\
v(x,0)=\infty\;\text{ for $x> \gamma/\beta$},
\end{cases}
\ee
we will focus on the values of $v(x,y)$ for $x>\gamma/\beta$ and $y> 0$.  
\begin{figure}[h]
\centering
 \includegraphics[width=.8\textwidth]{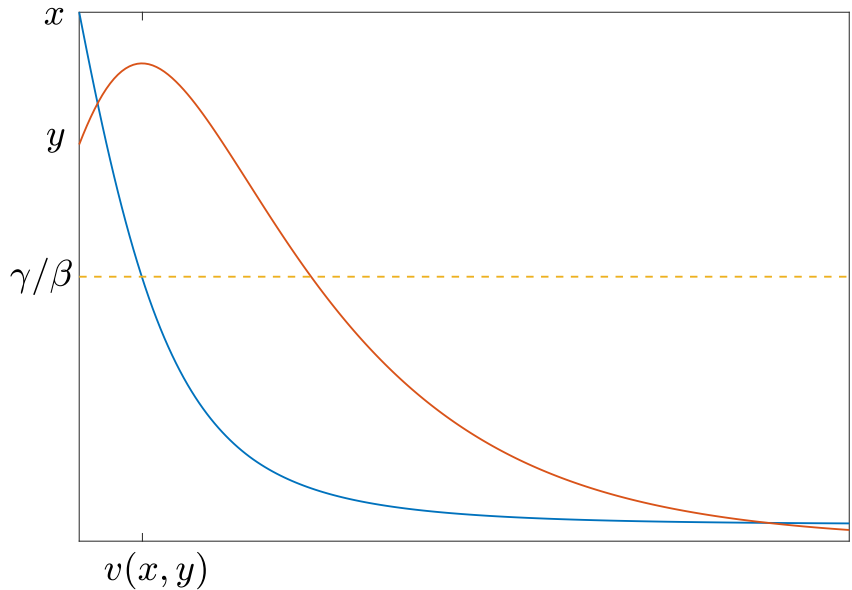}
 \caption{The solution pair $S,I$ of \eqref{SIR} with $S(0)=x$ and $I(0)=y$. $S(t)$ is shown in blue and $I(t)$ is shown in red. Note that $v(x,y)$ is the first time $S(t)\le \gamma/\beta$ which is also the same time that $I$ starts to decrease.}\label{Fig6}
\end{figure}
\par The methods we use to prove Theorem \ref{uthm} extend analogously to $v$. In particular, $v$ satisfies the same PDE as $u$ with the 
boundary condition
\be\label{vBC}
v(\gamma/\beta,y)=0,\; y\in (0,\infty).
\ee
Almost in parallel with Theorem \ref{uthm}, we have the subsequent assertion. 

\begin{thm}\label{vthm}
The function $v$ defined in \eqref{vExitTimeFun} has the following properties. 

\begin{enumerate}[(i)]

\item $v$ is continuous on $[\gamma/\beta,\infty)\times(0,\infty)$ and is smooth in $(\gamma/\beta,\infty)\times(0,\infty)$. 

\item $v$ is the unique solution of \eqref{babyHJB} in $(\gamma/\beta,\infty)\times(0,\infty)$ which satisfies the boundary condition \eqref{vBC}.

\item  For each $x\ge \gamma/\beta$ and $y>0$,
 \be\label{RepFormulaForV}
v(x,y)=\int^x_{\gamma/\beta}\frac{dz}{\beta z\left((\gamma/\beta)\ln z-z +\psi(x,y)\right)}.
\ee

\end{enumerate}
 \end{thm}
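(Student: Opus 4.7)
The plan is to mirror the proof of Theorem~\ref{uthm} step by step, replacing the exit set $\{y = \mu\}$ by the vertical half-line $\{x = \gamma/\beta\}$. Let $\Phi_t(x,y) = (S(t;x,y), I(t;x,y))$ denote the flow of \eqref{SIR}; by standard ODE theory this is jointly smooth in $(t,x,y)$, and by \eqref{ExpFormulae} both components remain strictly positive whenever $x,y>0$.

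For part (i), I would define $v(x,y)$ implicitly by $S(v(x,y); x, y) = \gamma/\beta$. For $(x,y)$ with $x > \gamma/\beta$ and $y > 0$, such a time is finite by the standard SIR analysis recalled around \eqref{Slessthangammaoverbeta}, and it is unique because $t \mapsto S(t;x,y)$ is strictly decreasing. Since $\partial_t S = -\beta S I$ evaluated at $t = v(x,y)$ equals $-\gamma \, I(v(x,y); x,y) < 0$, the implicit function theorem applied to $F(t,x,y) := S(t;x,y) - \gamma/\beta$ yields smoothness of $v$ on $(\gamma/\beta,\infty)\times(0,\infty)$. Continuity up to the boundary line $x = \gamma/\beta$ then follows from continuous dependence on initial data together with the definition $v(\gamma/\beta, y) = 0$.

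For part (ii), I would differentiate the semigroup identity $v(\Phi_t(x,y)) = v(x,y) - t$ at $t=0$; using $\dot S(0) = -\beta xy$ and $\dot I(0) = (\beta x - \gamma) y$, this yields \eqref{babyHJB}, and \eqref{vBC} is immediate. For uniqueness, if $w \in C^1((\gamma/\beta,\infty)\times(0,\infty)) \cap C([\gamma/\beta,\infty)\times(0,\infty))$ solves \eqref{babyHJB} with boundary value zero on $\{x=\gamma/\beta\}$, then the PDE asserts $\frac{d}{dt}\, w(\Phi_t(x,y)) = -1$ along each characteristic. Integrating from $0$ to $v(x,y)$ and using $\Phi_{v(x,y)}(x,y) \in \{x = \gamma/\beta\}$, I conclude $w(x,y) = v(x,y)$.

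For part (iii), since $\psi$ is conserved along the flow by \eqref{LevelSetFun}, the trajectory from $(x,y)$ satisfies $I(t) = (\gamma/\beta)\ln S(t) - S(t) + \psi(x,y)$. Substituting into $\dot S = -\beta S I$ and separating variables, a change of variable $z = S(t)$ converts $\int_0^{v(x,y)} dt$ into the claimed integral from $\gamma/\beta$ to $x$. The one point to watch is that the denominator $\beta z\bigl((\gamma/\beta)\ln z - z + \psi(x,y)\bigr)$ be bounded away from zero on $[\gamma/\beta, x]$; but this denominator is precisely $\beta z \cdot I$ expressed as a function of $S = z$ along the level curve, and its positivity on the closed interval is exactly the strict positivity of $I$ along the trajectory, which holds since $y > 0$. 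This is the only mildly delicate step and is already handled by the positivity discussion in the introduction.
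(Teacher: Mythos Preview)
Your proposal is correct and matches the paper's approach, which explicitly says to mirror the proof of Theorem~\ref{uthm} using Lemmas~\ref{CrudeUpperV} and~\ref{NondegDerivativeSlem} in place of Lemmas~\ref{CrudeUpperU} and~\ref{NondegDerivativelem}. The only cosmetic difference is in part~(iii): the paper derives the integral formula by differentiating $v$ along the level curve of $\psi$ and invoking the PDE~\eqref{babyHJB}, whereas you obtain it by the equivalent route of substituting the conservation law into $\dot S=-\beta SI$ and changing variables $z=S(t)$; both computations are standard and yield the same result.
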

 
 \par We will then use the above representation formulae to show how to precisely estimate the $u(x,y)$ and $v(x,y)$ when $x+y$ is large.  

\begin{thm}\label{AsympThm}
Define $u$ by \eqref{uExitTimeFun} and $v$ by \eqref{vExitTimeFun}. Then 
\be\label{uAsymptotic}
\lim_{\substack{x+y\rightarrow\infty \\ x\ge 0,\;y\ge \mu}}\frac{\displaystyle u(x,y)}{\displaystyle\frac{1}{\gamma}\ln\left(\frac{x+y}{\mu}\right)}=1
\ee
and 
\be\label{vAsymptotic} 
\lim_{\substack{x+y\rightarrow\infty \\ x> \gamma/\beta,\;y>0}}\frac{\displaystyle \beta(x-\gamma/\beta+y) }{\displaystyle\ln\left[\left(\frac{x}{\gamma/\beta}\right)\left(\frac{x-\gamma/\beta}{y}+1\right)\right]}\cdot v(x,y)=1. 
\ee
\end{thm}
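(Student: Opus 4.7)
The plan is to extract both asymptotics directly from the representation formulas \eqref{RepFormulaForU} and \eqref{RepFormulaForV} through careful integral estimates. Throughout, write $I(z) := \psi(x,y) - z + (\gamma/\beta)\ln z$, so that on the phase curve $I(x) = y$, $I(a) = \mu$, and $I$ attains its maximum $I_{\max} = \psi(x,y) - \gamma/\beta + (\gamma/\beta)\ln(\gamma/\beta)$ at $z = \gamma/\beta$. As $x+y\to\infty$ one has $\psi\to\infty$, $I_{\max} = (x+y)(1+o(1))$ (from $I_{\max} = x+y-\gamma/\beta-(\gamma/\beta)\ln(\beta x/\gamma)$), and $a\to 0$ (forced by $\psi-\mu = a - (\gamma/\beta)\ln a \to \infty$).

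I would prove \eqref{vAsymptotic} first. On $[\gamma/\beta,x]$, decompose $I(z) = (y + x - z) - (\gamma/\beta)\ln(x/z)$. Since $\ln(x/z)\ge 0$ there, $I(z)\le y + x - z$, and partial fractions yields the lower bound
$$v(x,y) \;\ge\; \int_{\gamma/\beta}^x \frac{dz}{\beta z(y + x - z)} \;=\; \frac{1}{\beta(x+y)}\ln\!\left[\frac{\beta x}{\gamma}\cdot\frac{x+y-\gamma/\beta}{y}\right].$$
For the matching upper bound, exploit the strict concavity of $I$ on $(0,\infty)$ (since $I''(z)=-\gamma/(\beta z^2)<0$): the chord $L(z) := I_{\max} - \frac{I_{\max}-y}{x-\gamma/\beta}(z-\gamma/\beta)$ satisfies $L(z)\le I(z)$ on $[\gamma/\beta,x]$, and a second partial-fractions computation gives
$$v(x,y) \;\le\; \int_{\gamma/\beta}^x \frac{dz}{\beta z\,L(z)} \;=\; \frac{1}{\beta\alpha}\ln\!\left[\frac{\beta x}{\gamma}\cdot\frac{I_{\max}}{y}\right], \qquad \alpha := \frac{xI_{\max} - (\gamma/\beta)y}{x-\gamma/\beta}.$$
Direct algebra using the formula for $I_{\max}$ shows $\alpha/(x+y-\gamma/\beta)\to 1$ and $\ln[(\beta x/\gamma)(I_{\max}/y)] / \ln[(\beta x/\gamma)(1+(x-\gamma/\beta)/y)] \to 1$ as $x+y\to\infty$. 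Both bounds are therefore asymptotic to the target $\frac{\ln[(\beta x/\gamma)(1+(x-\gamma/\beta)/y)]}{\beta(x+y-\gamma/\beta)}$, proving \eqref{vAsymptotic} and in particular yielding $v(x,y)=O(\ln(x+y)/(x+y))\to 0$.

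For \eqref{uAsymptotic}, write $u = v + (u-v)$. Changing variables on the ascending branch $z = z_-(I)\in(0,\gamma/\beta]$ (where $I'(z) > 0$) gives
$$u(x,y)-v(x,y) \;=\; \int_a^{\gamma/\beta} \frac{dz}{\beta z I(z)} \;=\; \int_\mu^{I_{\max}} \frac{dI}{(\gamma-\beta z_-(I))\,I}.$$
Fix $\epsilon\in(0,\gamma/\beta)$ small and set $I_\epsilon := I(\epsilon)$. On $[\mu,I_\epsilon]$ we have $z_-(I)\le\epsilon$, so the integrand lies between $1/(\gamma I)$ and $1/((\gamma-\beta\epsilon)I)$, producing
$$\frac{1}{\gamma}\ln\!\frac{I_\epsilon}{\mu} \;\le\; \int_\mu^{I_\epsilon}\frac{dI}{(\gamma-\beta z_-(I))I} \;\le\; \frac{1}{\gamma-\beta\epsilon}\ln\!\frac{I_\epsilon}{\mu}.$$
The tail $[I_\epsilon,I_{\max}]$ has length $O(1)$; using the identity $I_{\max}-I(z) = (\gamma/\beta)[-\ln(1-t)-t]$ with $t:=1-\beta z/\gamma$, together with $-\ln(1-t)-t\le 2t^2$ for $t\le 1/2$, gives $\gamma - \beta z_-(I)\ge \sqrt{\beta\gamma(I_{\max}-I)/2}$ on the sub-interval where $z_-\ge\gamma/(2\beta)$, while the elementary bound $\gamma-\beta z_-\ge\gamma/2$ handles the complement; the tail therefore contributes $O(1/I_{\max})\to 0$. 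Because $I_\epsilon/(x+y)\to 1$, $\ln(I_\epsilon/\mu)=\ln((x+y)/\mu)(1+o(1))$. Combining with $v\to 0$ from the previous step and sending $\epsilon\to 0^+$ after $x+y\to\infty$ pinches $u(x,y)/\bigl(\frac{1}{\gamma}\ln((x+y)/\mu)\bigr)$ to $1$, proving \eqref{uAsymptotic}.

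The main obstacle lies in the asymptotic matching for \eqref{vAsymptotic}: verifying uniformly over all regimes in which $x+y\to\infty$ ($x\to\infty$ with $y$ bounded, $y\to\infty$ with $x$ bounded, both unbounded, or $x$ approaching $\gamma/\beta$ while $y\to\infty$) that the ratio of logarithms and the ratio of denominators $\alpha$, $x+y$, $x+y-\gamma/\beta$ all tend to $1$. A case analysis based on the relative sizes of $(x-\gamma/\beta)$ and $y$, combined with the estimate $\ln x = o(x+y)$ to control the discrepancy $I_{\max} - (x+y-\gamma/\beta) = -(\gamma/\beta)\ln(\beta x/\gamma)$, should resolve this uniformly.
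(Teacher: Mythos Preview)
Your approach to \eqref{vAsymptotic} is essentially the paper's: both exploit the concavity of $I$ (the paper's $g$) on $[\gamma/\beta,x]$ to trap it between two affine functions and evaluate the resulting integrals by partial fractions. Your chord bound $L(z)\le I(z)$ is exactly the paper's lower bound for $g$, and your $\alpha$ coincides with the paper's denominator $y+x\bigl(1-\tfrac{\gamma}{\beta}\tfrac{\ln x-\ln(\gamma/\beta)}{x-\gamma/\beta}\bigr)$. For the other direction you use the simpler linear upper bound $I(z)\le y+x-z$ in place of the paper's tangent line at $x$; both yield lower bounds on $v$ asymptotic to the target, so nothing is lost.

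Your treatment of \eqref{uAsymptotic} is genuinely different. The paper obtains the lower bound by checking that $w(x,y)=\tfrac{1}{\gamma}\ln\bigl(\tfrac{x+y}{\gamma/\beta+\mu}\bigr)$ is a subsolution of \eqref{babyHJB}, and obtains the upper bound dynamically: it fixes $t\in(v(x,y),u(x,y))$, writes $u(x,y)=t+u(S(t),I(t))$, applies the elementary estimate $u\le \ln(y/\mu)/(\gamma-\beta x)$ (valid once $S(t)<\gamma/\beta$), and controls $\gamma-\beta S(t)$ via $S(t)\le(\gamma/\beta)e^{-\beta\mu(t-v)}$. You instead work purely with the representation formula, changing variables to $I$ on the ascending branch and splitting at $I_\epsilon$. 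This is a clean, self-contained integral argument that avoids the PDE and flow machinery; the paper's route is shorter but leans on the characterization in Theorem~\ref{uthm}(ii). Incidentally, your tail bound can be simplified: since $\tfrac{dI}{\gamma-\beta z_-}=\tfrac{dz}{\beta z}$, one has $\int_{I_\epsilon}^{I_{\max}}\tfrac{dI}{\gamma-\beta z_-(I)}=\int_\epsilon^{\gamma/\beta}\tfrac{dz}{\beta z}=\tfrac{1}{\beta}\ln\tfrac{\gamma}{\beta\epsilon}$, a constant in $(x,y)$, so the tail is $O(1/I_\epsilon)$ without any square-root analysis.

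One caveat to tidy up: the identity $u-v=\int_\mu^{I_{\max}}\tfrac{dI}{(\gamma-\beta z_-)I}$ and the claim $I_{\max}=(x+y)(1+o(1))$ hold only for $x\ge\gamma/\beta$. When $0<x<\gamma/\beta$ one has $v=0$ and the $I$-integral runs only up to $y$, while if $x\to 0$ then $I_{\max}=x+y-\gamma/\beta-(\gamma/\beta)\ln(\beta x/\gamma)$ can exceed $x+y$ by an unbounded factor. The repair is routine: for $x\le\epsilon$ the entire trajectory has $z_-\le\epsilon$, so $\tfrac{1}{\gamma}\ln(y/\mu)\le u\le\tfrac{1}{\gamma-\beta\epsilon}\ln(y/\mu)$ directly and $y/(x+y)\to 1$; for $\epsilon<x<\gamma/\beta$ your split at $I_\epsilon$ still works with upper limit $y$ in place of $I_{\max}$, and here $\ln x$ is bounded so $I_\epsilon/(x+y)\to 1$ as claimed.
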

 
 \par The limit \eqref{uAsymptotic} implies that $u(x,y)$ tends to $\infty$ as $x+y\rightarrow\infty$. This limit also reduces to an exact formula when $x=0$. In this case, $S(t)=0$ and $I(t)=ye^{-\gamma t}$, so 
 $$
 u(0,y)=\frac{1}{\gamma}\ln\left(\frac{y}{\mu}\right).
 $$
 Alternatively, the limit \eqref{vAsymptotic} implies that $v(x,y)$ tends to $0$ as $x+y\rightarrow\infty$ with $y\ge \delta$ for each $\delta>0$.  This will be a crucial element of our proof of \eqref{uAsymptotic}.
 
 \par This paper is organized as follows. In section \ref{uSect}, we will study $u$ and prove Theorem \ref{uthm}.  Then in section \ref{vSect}, we will indicate what changes are necessary so that our proof of Theorem \ref{uthm} adapts to Theorem \ref{vthm}.  Finally, we will prove Theorem \ref{AsympThm} in section \ref{AsympSect}. We also would like to acknowledge that this material is based upon work supported by the NSF
under Grants No. DMS-1440140 and DMS-1554130, NSA under Grant No.
H98230-20-1-0015, and the Sloan Foundation under Grant No. G-2020-12602 while the
authors participated in the ADJOINT program hosted by MSRI.

\section{The first time $I(t)\le \mu$}\label{uSect}
This section is dedicated to proving Theorem \ref{uthm}.  We will begin with an elementary upper bound on $u$. 

\begin{lem}\label{CrudeUpperU}
For each $x\ge 0$ and $y\ge \mu$, 
\be
u(x,y)\le \frac{x+y}{\gamma\mu}.
\ee
\end{lem}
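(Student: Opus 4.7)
The plan is to exploit the conserved-quantity-like identity $\dot S+\dot I=-\gamma I$ together with the fact that on the interval $[0,u(x,y)]$ the infected population stays above the threshold $\mu$.

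First I would fix $x\ge 0$, $y\ge \mu$, write $u=u(x,y)$, and let $S,I$ denote the solution of \eqref{SIR} with $S(0)=x$, $I(0)=y$. The case $y=\mu$ gives $u=0$ and the bound is trivial, so I would assume $y>\mu$, in which case $u>0$ and, by continuity of $I$ and the definition of $u$ as the \emph{first} hitting time of $\mu$ from above, one has $I(\tau)\ge \mu$ for all $\tau\in [0,u]$.

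Next I would integrate the identity $\frac{d}{dt}(S+I)=-\gamma I$ from $0$ to $u$ to obtain
\begin{equation}
S(u)+I(u)=x+y-\gamma\int_0^u I(\tau)\,d\tau.
\end{equation}
Since $S(u),I(u)\ge 0$, rearranging gives
\begin{equation}
\gamma\int_0^u I(\tau)\,d\tau\le x+y.
\end{equation}
On the other hand, since $I(\tau)\ge \mu$ on $[0,u]$, the left side is at least $\gamma\mu\,u$. Combining these two inequalities yields $\gamma\mu\,u(x,y)\le x+y$, which is the claim.

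There is no real obstacle here; the only subtle point is making sure one uses $I(\tau)\ge \mu$ on the closed interval up to the first hitting time, which is immediate from continuity and the definition of $u$ in \eqref{uExitTimeFun}.
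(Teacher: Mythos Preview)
Your proof is correct and follows essentially the same argument as the paper: integrate the identity $\frac{d}{dt}(S+I)=-\gamma I$ over $[0,u(x,y)]$, use nonnegativity of $S,I$ to bound $\gamma\int_0^{u}I(\tau)\,d\tau\le x+y$, and then use $I(\tau)\ge\mu$ on that interval to get $\gamma\mu\,u\le x+y$. The only cosmetic difference is that you separate out the trivial case $y=\mu$ explicitly, which the paper does not bother to do.
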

\begin{proof}
Let $S,I$ be the solution pair of the SIR ODE \eqref{SIR} with $S(0)=x$ and $I(0)=y$. Since $\frac{d}{dt}(S+I)(t)=-\gamma I(t)\le 0$,  
$$
\int^t_0 \gamma I(\tau)d\tau +S(t)+I(t)=x+y.
$$ 
Choosing $t=u(x,y)$ and noting that $I(\tau)\ge \mu$ for $0\le \tau \le u(x,y)$ gives 
$$
u(x,y)\gamma\mu\le \int^{u(x,y)}_0 \gamma I(\tau)d\tau\le x+y.
$$ 
\end{proof}
Next we observe that $I$ is always decreasing at the time it reaches the threshold $\mu$. 
 \begin{lem}\label{NondegDerivativelem}
Let $x>0$, $y>\mu$, and suppose $S,I$ is the solution of \eqref{SIR} with $S(0)=x$ and $I(0)=y$.  Then 
$$
\dot I(u(x,y))<0. 
$$
\end{lem}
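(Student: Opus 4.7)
The plan is to reduce the strict inequality $\dot{I}(u(x,y)) < 0$ to the strict inequality $S(u(x,y)) < \gamma/\beta$. Writing $u = u(x,y)$, we have $\dot{I}(u) = (\beta S(u) - \gamma) I(u)$, and the exponential representation \eqref{ExpFormulae} together with $I(0) = y > \mu > 0$ ensures that $I$ remains strictly positive, so the sign of $\dot{I}(u)$ matches the sign of $\beta S(u) - \gamma$. Thus everything reduces to establishing the strict bound $S(u) < \gamma/\beta$.

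First I would pin down $I(u) = \mu$. By the definition \eqref{uExitTimeFun} of $u$, we have $I(t) > \mu$ for every $t \in [0, u)$ and $I(u) \le \mu$; continuity of $I$ then forces $I(u) = \mu$. Since $I(t) \ge I(u)$ for all $t \in [0, u]$, the usual one-sided comparison $(I(u) - I(t))/(u - t) \le 0$ for $t \uparrow u$ yields $\dot{I}(u) \le 0$. Equivalently, $S(u) \le \gamma/\beta$. This weak inequality is the easy half.

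The only real obstacle is excluding the equality $S(u) = \gamma/\beta$. Here I would argue by contradiction using strict monotonicity of $S$: since $x > 0$, the formula \eqref{ExpFormulae} gives $S(t) > 0$ for every $t$, and then $\dot{S}(t) = -\beta S(t) I(t) < 0$, so $S$ is strictly decreasing on $[0, \infty)$. If $S(u) = \gamma/\beta$, then $S(t) > \gamma/\beta$ for every $t \in [0, u)$, which makes $\dot{I}(t) = (\beta S(t) - \gamma) I(t) > 0$ on the open interval $[0, u)$. Consequently $I$ is strictly increasing on $[0, u]$, which would give $I(u) > I(0) = y > \mu$, contradicting $I(u) = \mu$. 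This forces $S(u) < \gamma/\beta$ and hence $\dot{I}(u) < 0$.

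Note that the strict hypothesis $y > \mu$ is essential exactly at the last step: it guarantees that the interval $[0, u)$ has positive length on which $I$ must strictly exceed $\mu$, which is precisely what drives the monotonicity contradiction. If instead one had $y = \mu$, the exit time would be $u = 0$ and the conclusion can genuinely fail when $x > \gamma/\beta$, so no weakening of the hypothesis is possible.
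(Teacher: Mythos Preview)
Your proof is correct and rests on the same key observation as the paper's: as long as $S(t)\ge\gamma/\beta$, the infected population $I$ is increasing, so it cannot have dropped from $y>\mu$ down to $\mu$. The paper organizes this via a case split on whether $\beta x\le\gamma$ or $\beta x>\gamma$, and in the latter case invokes the auxiliary exit time $v(x,y)$ to deduce $v(x,y)<u(x,y)$ and hence $S(u)<S(v)=\gamma/\beta$. Your version is more self-contained: you first establish the weak inequality $S(u)\le\gamma/\beta$ from the one-sided difference quotient, then rule out equality by a direct contradiction using strict monotonicity of $S$, with no case split and no appeal to $v$. The paper's route has the side benefit of yielding the remark $v(x,y)\le u(x,y)$ along the way; yours is slightly leaner as a standalone proof of the lemma.
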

\begin{proof}
As $\dot I(t)=(\beta S(t)-\gamma)I(t)$, it suffices to show 
\be\label{SIneqIFT}
\beta S(u(x,y))-\gamma<0.
\ee
If $\beta x\le \gamma$, then $\beta S(t)<\gamma$ for all $t>0$ since $S$ is decreasing. Consequently, \eqref{SIneqIFT} holds for $t=u(x,y)$.  Alternatively, if $\beta x> \gamma$, then $I$ is nonincreasing on the interval $[0, v(x,y)]$ and
$$
\beta S(v(x,y))-\gamma=0.
$$ 
In particular, $I(v(x,y))\ge y>\mu$. Thus, $v(x,y)<u(x,y)$.  As $S$ is decreasing, 
$$
\beta S(u(x,y))-\gamma<\beta S(v(x,y))-\gamma=0.
$$
\end{proof}
\begin{rem}
Since $v(x,y)=0$ for $x\in [0,\gamma/\beta]$, it also follows from this proof that 
\be
v(x,y)\le u(x,y)
\ee
for each $x\ge0$ and $y\ge \mu$.
\end{rem}
We will also need to verify that solutions of the SIR ODE \eqref{SIR} depend continuously on their initial conditions. 
\begin{lem}\label{UniformLimSkIkLem}
Suppose $x^k\ge 0$ and $y^k\ge 0$, and let $S^k,I^k$ be the solution of \eqref{SIR} with $S^k(0)=x^k$ and $I^k(0)=y^k$ for each $k\in \N$. 
If $x^k\rightarrow x$ and $y^k\rightarrow y$ as $k\rightarrow\infty$, then 
\be\label{UniformLimSkIk}
S(t)=\lim_{k\rightarrow\infty}S^k(t_k)\quad\text{and}\quad I(t)=\lim_{k\rightarrow\infty}I^k(t_k)
\ee
for each sequence $t_k\ge 0$ such that $t_k\rightarrow t$. Here $S,I$ is the solution of \eqref{SIR} with $S(0)=x$ and $I(0)=y$.
\end{lem}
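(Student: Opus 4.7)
The plan is to establish uniform convergence of $(S^k, I^k)$ to $(S, I)$ on compact time intervals via Gr\"onwall's inequality, and then upgrade this to convergence at the varying evaluation points $t_k$ using continuity of the limiting functions.

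First I would fix $T > t$ and observe that since $(x^k, y^k)$ is a convergent, hence bounded, sequence, and since the monotonicity argument from the introduction shows $S^k(\tau) + I^k(\tau) \le x^k + y^k$ for all $\tau \ge 0$, there is a constant $M$ independent of $k$ with $0 \le S^k(\tau), I^k(\tau) \le M$ and $0 \le S(\tau), I(\tau) \le M$ for every $\tau \in [0, T]$. On the bounded rectangle $[0, M]^2$ the vector field $F(s, i) = (-\beta s i,\, \beta s i - \gamma i)$ is smooth, so it is Lipschitz continuous with some constant $L$ depending only on $\beta, \gamma, M$.

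Next I would integrate \eqref{SIR} to obtain the integral equations satisfied by $(S^k, I^k)$ and $(S, I)$, subtract them, and use the Lipschitz bound on $F$ to deduce
$$|S^k(\tau) - S(\tau)| + |I^k(\tau) - I(\tau)| \le |x^k - x| + |y^k - y| + L \int_0^\tau \bigl(|S^k(\sigma) - S(\sigma)| + |I^k(\sigma) - I(\sigma)|\bigr)\, d\sigma$$
for $\tau \in [0, T]$. Gr\"onwall's inequality then gives
$$\sup_{\tau \in [0, T]} \bigl(|S^k(\tau) - S(\tau)| + |I^k(\tau) - I(\tau)|\bigr) \le \bigl(|x^k - x| + |y^k - y|\bigr) e^{L T},$$
which tends to $0$ as $k \to \infty$. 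Hence $S^k \to S$ and $I^k \to I$ uniformly on $[0, T]$.

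Finally, to handle the shifting evaluation points, I would apply the triangle inequality
$$|S^k(t_k) - S(t)| \le |S^k(t_k) - S(t_k)| + |S(t_k) - S(t)|.$$
For $k$ sufficiently large, $t_k \in [0, T]$, so the first term is controlled by the uniform bound above and tends to $0$; the second term tends to $0$ because $S$ is continuous at $t$. The identical argument applied to $I$ completes the proof. There is no serious obstacle here; the only thing to be careful about is to exploit the a priori bound $S + I \le x + y$ before invoking any Lipschitz estimate, since $F$ is only \emph{locally} Lipschitz on $[0, \infty)^2$.
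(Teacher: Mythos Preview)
Your argument is correct, but it is not the route the paper takes. The paper proceeds by compactness rather than by a quantitative stability estimate: it uses the uniform bound $S^k+I^k\le x^k+y^k$ together with the ODE to get uniform derivative bounds, invokes Arzel\`a--Ascoli to extract a locally uniformly convergent subsequence, passes to the limit in the integral form of \eqref{SIR} to identify the subsequential limit as the unique solution $(S,I)$, and concludes full-sequence local uniform convergence from the uniqueness of this limit. Your Gr\"onwall approach is more direct and yields an explicit rate $\bigl(|x^k-x|+|y^k-y|\bigr)e^{LT}$, which the compactness argument does not; on the other hand, the Arzel\`a--Ascoli route does not need to isolate a Lipschitz constant for the vector field and would transfer more readily to situations where only equicontinuity, not a Lipschitz bound, is available. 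For the SIR system both methods are perfectly adequate, and your final triangle-inequality step to pass from uniform convergence on $[0,T]$ to convergence at the moving points $t_k$ is exactly the reasoning implicit in the paper's concluding sentence.
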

\begin{proof}
We note that $S^k$ and $I^k$ are nonnegative functions with
$$
S^k(t)+I^k(t)\le x^k+y^k
$$
for all $t\ge 0$. Thus, $S^k$ and $I^k$ are uniformly bounded independently of $k\in \N$. In view of \eqref{SIR}, we additionally have  
$$
|\dot S^k(t)|\le \beta(x^k+y^k)^2
$$
and 
$$
|\dot I^k(t)|\le \beta(x^k+y^k)^2+\gamma(x^k+y^k)
$$
for each $t\ge 0$. Consequently, $S^k, I^k$ are uniformly equicontinuous on $[0,\infty)$. 

\par The Arzel\`a-Ascoli theorem then implies there are subsequences $S^{k_j}$ and $I^{k_j}$ converging uniformly on any bounded subinterval of $[0,\infty)$ to continuous functions $S$ and $I$, respectively.  
Observe 
\be
S^k(t)=x^k-\int^t_0\beta S^k(\tau)I^k(\tau)d\tau\;\;\text{and}\;\; I^k(t)=y^k+\int^t_0(\beta S^k(\tau)-\gamma)I^k(\tau)d\tau
\ee
for each $t\ge 0$. Letting $k=k_j\rightarrow\infty$ and employing the uniform convergence of $S^{k_j}$ and $I^{k_j}$ on $[0,t]$ for each $t\ge 0$, we see that $S,I$ is the solution of \eqref{SIR} with $S(0)=x$ and $I(0)=y$. As this limit is independent of the 
subsequence, it must be that $S^k$ and $I^k$ converge to $S$ and $I$, respectively, locally uniformly on $[0,\infty)$. As a result, we conclude \eqref{UniformLimSkIk}.
\end{proof}
In our proof of Theorem \ref{uthm} below, we will employ the flow of the SIR ODE \eqref{SIR}. This is the mapping 
\be
\Phi:[0,\infty)^3\rightarrow [0,\infty)^2; (x,y,t)\mapsto (S(t),I(t))
\ee
where $S,I$ is the solution pair of \eqref{SIR} with $S(0)=x$ and $I(0)=y$.  We will also write $\Phi=(\Phi_1,\Phi_2)$ so that
\be
\Phi_1(x,y,t)=S(t)\;\;\text{and}\;\;\Phi_2(x,y,t)=I(t).
\ee
A direct corollary of Lemma \ref{UniformLimSkIkLem} is that $\Phi$ is a continuous mapping. With a bit more work, it can also be shown that $\Phi: (0,\infty)^3\rightarrow [0,\infty)^2$ is smooth (exercise 3.2 in Chapter 1 of \cite{MR587488}, Chapter 1 section 7 of \cite{MR0069338}).

\begin{proof}[Proof of Theorem \ref{uthm}]
$(i)$ Suppose $x^k\ge 0$ and $y^k\ge \mu$ with $x^k\rightarrow x$ and $y^k\rightarrow y$ as $k\rightarrow\infty$.  By Lemma \eqref{CrudeUpperU}, $u(x^k,y^k)$ is uniformly bounded.  We can then select a subsequence $u(x^{k_j},y^{k_j})$ which converges to some $t\ge0$. From the definition of $u$, we also have 
\be\label{Phi2xkykukEqn}
\Phi_2(x^k,y^k,u(x^k,y^k))=\mu
\ee
for each $k\in \N$. Since $\Phi_2$ is continuous, we can send $k=k_j\rightarrow\infty$ in \eqref{Phi2xkykukEqn} to get 
$$
\Phi_2(x,y,t)=\mu.
$$
As $y\ge \mu$, the limit $t$ is equal to $u(x,y)$. Because this limit is independent of the subsequence $u(x^{k_j},y^{k_j})$, it must be that 
\be
u(x,y)=\lim_{k\rightarrow\infty}u(x^k,y^k).
\ee
It follows that $u$ is continuous on $[0,\infty)\times[\mu,\infty)$.

\par Let $x>0$ and $y>\mu$ and recall that $\Phi_2(x,y,u(x,y))=\mu$.  By Lemma \ref{NondegDerivativelem}, 
\be
\partial_t\Phi_2(x,y,u(x,y))<0.
\ee
Since $\Phi_2$ is smooth in a neighborhood of $(x,y,u(x,y))$, the implicit function theorem implies that $u$ is smooth in a 
neighborhood of $(x,y)$.  We conclude that $u$ is smooth in $(0,\infty)\times (\mu,\infty)$.
\\
\par $(ii)$ Fix $x>0$ and $y>\mu$, and let $S,I$ be the solution of \eqref{SIR} with $S(0)=x$ and $I(0)=y$.
Observe that for each $0\le t< u(x,y)$, 
\begin{align*}
u(S(t),I(t))&=\min\{\tau\ge 0: I(t+\tau)\le \mu\}\\
&=\min\{s\ge t: I(s)\le \mu\}-t\\
&=u(x,y)-t.
\end{align*}
Therefore,
\begin{align}
1&=
-\left.\frac{d}{dt}u(S(t),I(t))\right|_{t=0}\\
&=\left.\Big[\beta S(t)I(t)\partial_xu(S(t),I(t))+(\gamma-\beta S(t))I(t)\partial_yu(S(t),I(t))\Big]\right|_{t=0}\\
&=\beta xy \partial_xu(x,y)+(\gamma-\beta x)y\partial_yu(x,y).
\end{align}
We conclude that $u$ satisfies \eqref{babyHJB}. \par Now suppose $w$ is a solution of \eqref{babyHJB} which satisfies the boundary condition \eqref{uBC}. Note  
$$
\frac{d}{dt}w(S(t),I(t))=-\Big[\beta S(t)I(t)\partial_xw(S(t),I(t))+(\gamma-\beta S(t))I(t)\partial_yw(S(t),I(t))\Big]=-1
$$
for $0\le t<u(x,y)$. Integrating this equation from $t=0$ to $t=u(x,y)$ gives 
$$
w(S(u(x,y)),I(u(x,y)))-w(x,x)=-u(x,y).
$$
Since $I(u(x,y))=\mu$, $\beta S(u(x,y))\le \gamma$, and $w(S(u(x,y)),\mu)=0$, it follows that $w(x,y)=u(x,y)$. Therefore, $u$ is the unique solution of 
the PDE \eqref{babyHJB} which satisfies the boundary condition \eqref{uBC}.
\\
\par $(iii)$ Suppose either $x>0$ and $y>\mu$ or $x>\gamma/\beta$ and $y=\mu$.  We recall that $a=a(x,y)\in (0,\gamma/\beta]$ is the unique solution of 
$$
\psi(x,y)=\psi(a,\mu).
$$
It is also easy to check that 
$$
\frac{\gamma}{\beta}\ln z-z+\psi(x,y)>\mu
$$
for $a<z<x$.  See Figure \ref{LevelSetPlot} for an example.
\begin{figure}[h]
\centering
 \includegraphics[width=.8\textwidth]{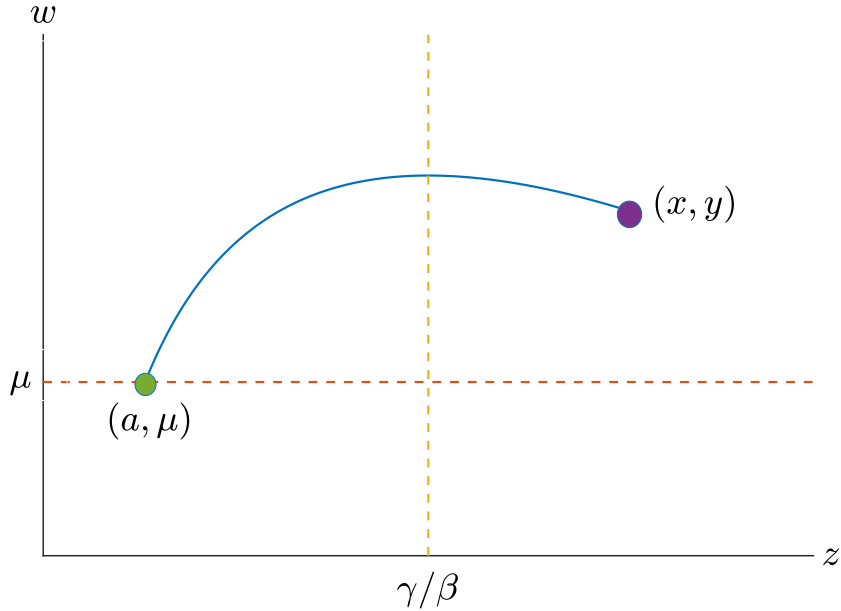}
 \caption{Graph of $w=(\gamma/\beta)\ln z-z+\psi(x,y)$ for $a\le z\le x$. Here $x>0$, $y>\mu$, and $a=a(x,y)\in (0,\gamma/\beta]$ is the unique solution of $\psi(x,y)=\psi(a,\mu)$. Also note that 
 this graph is a subset of the level set $\{(z,w)\in (0,\infty)^2: \psi(z,w)=\psi(x,y)\}$.}\label{LevelSetPlot}
\end{figure}

\par Since $u$ solves the PDE \eqref{babyHJB}, 
\begin{align}
\frac{d}{dz}u\left(z,\frac{\gamma}{\beta}\ln z-z+\psi(x,y)\right)&=\left.\frac{\beta zy \partial_xu(z,y)+(\gamma-\beta z)y\partial_yu(z,y)}{\beta zy}\right|_{y=\frac{\gamma}{\beta}\ln z-z+\psi(x,y)}\\
&=\frac{1}{\beta z\left(\frac{\gamma}{\beta}\ln z-z+\psi(x,y)\right)}
\end{align}
for $a<z<x$. Integrating from $z=a$ to $z=x$ and using the boundary condition \eqref{uBC} gives
$$
u(x,y)=\int^x_{a}\frac{dz}{\beta z\left((\gamma/\beta)\ln z-z +\psi(x,y)\right)}.
$$
\end{proof}
\begin{figure}[h]
\centering
 \includegraphics[width=.8\textwidth]{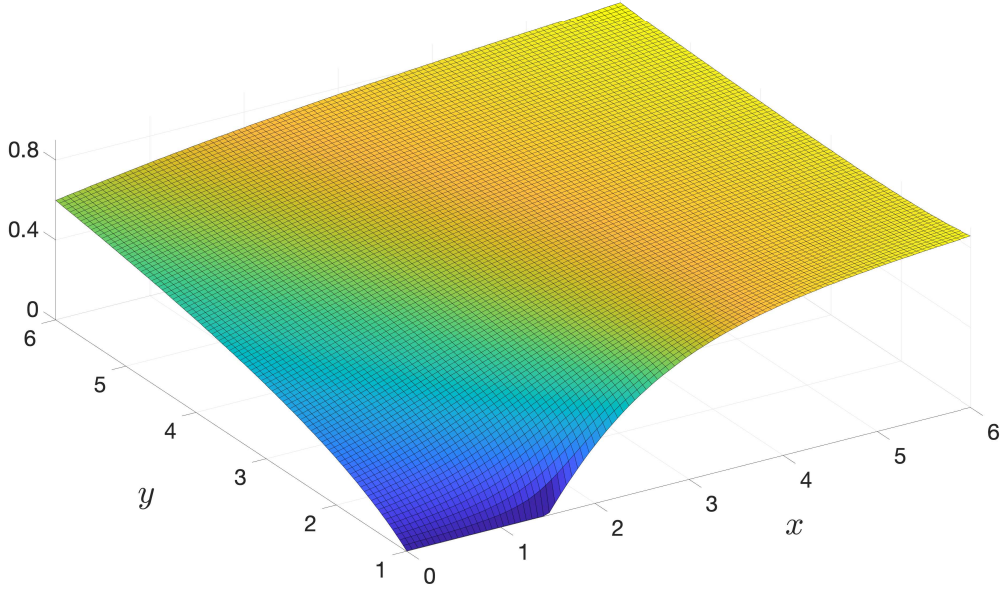}
 \caption{Numerical approximation of the function $u(x,y)$ for $(x,y)\in [0,6]\times[1,5]$. Here $\mu=1$, $\beta=2$, and $\gamma=3$.}\label{Fig4}
\end{figure}

\section{The first time $S(t)\le \gamma/\beta$}\label{vSect}
We will briefly point out what needs to be adapted from the previous section so that we can conclude Theorem \ref{vthm} involving $v$. 
We first note that $v$ is locally bounded in $[\gamma/\beta,\infty)\times (0,\infty)$.

\begin{lem}\label{CrudeUpperV}
For each $x\ge \gamma/\beta$ and $y>0$, 
\be
v(x,y)\le \frac{\ln x-\ln(\gamma/\beta)}{\beta y}.
\ee
\end{lem}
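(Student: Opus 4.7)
The plan is to exploit the two differential equations in \eqref{SIR} directly. From $\dot S=-\beta SI$, one gets $\frac{d}{dt}\ln S(t)=-\beta I(t)$, and integrating from $0$ to $t=v(x,y)$ yields
\[
\ln x-\ln S(v(x,y))=\beta\int_0^{v(x,y)}I(\tau)\,d\tau.
\]
If $x=\gamma/\beta$, then $v(x,y)=0$ and the inequality is immediate, so assume $x>\gamma/\beta$; then by continuity and the definition of $v(x,y)$, we have $S(v(x,y))=\gamma/\beta$, which turns the left-hand side into $\ln x-\ln(\gamma/\beta)$.

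The remaining step is to bound the integral from below by $y\cdot v(x,y)$. For $0\le\tau\le v(x,y)$ we have $S(\tau)\ge\gamma/\beta$ (that is the defining property of $v(x,y)$), and so $\dot I(\tau)=(\beta S(\tau)-\gamma)I(\tau)\ge0$. Thus $I$ is nondecreasing on $[0,v(x,y)]$, giving $I(\tau)\ge I(0)=y$ throughout that interval. Plugging this into the integral identity produces
\[
\ln x-\ln(\gamma/\beta)\;\ge\;\beta y\, v(x,y),
\]
which is the claimed bound after dividing by $\beta y>0$.

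I do not anticipate any obstacle: the only subtle point is making sure $S(v(x,y))=\gamma/\beta$ rather than $S(v(x,y))<\gamma/\beta$, but this follows from continuity of $S$ and minimality in the definition \eqref{vExitTimeFun}. The monotonicity of $I$ on $[0,v(x,y)]$ used here is exactly the ``infected individuals eventually decrease'' criterion recalled in the introduction via \eqref{Slessthangammaoverbeta}, so no new ingredients are needed.
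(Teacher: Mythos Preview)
Your argument is correct and is essentially the same as the paper's proof: both use that $I$ is nondecreasing on $[0,v(x,y)]$ (hence $I(\tau)\ge y$ there) together with the identity $S(v(x,y))=x\exp\bigl(-\beta\int_0^{v(x,y)}I(\tau)\,d\tau\bigr)$, which you obtain by integrating $\frac{d}{dt}\ln S=-\beta I$ and the paper quotes directly from \eqref{ExpFormulae}. The only differences are cosmetic.
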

\begin{proof}
Let $S,I$ denote the solution of \eqref{SIR} with $S(0)=x$ and $I(0)=y$. Since $I(t)$ is increasing on $t\in [0,v(x,y)]$, $I(t)\ge y$ for $t\in [0,v(x,y)]$. 
In view of \eqref{ExpFormulae}
\be
\frac{\gamma}{\beta}=S(v(x,y))=xe^{\displaystyle-\beta \int^{v(x,y)}_0I(\tau)d\tau}\le xe^{-\beta y v(x,y)}.
\ee
Taking the natural logarithm and rearranging leads to $v(x,y)\le  (\ln x -\ln(\gamma/\beta))/\beta y $, which is what we wanted to show.
\end{proof}
The next assertion follows since $S$ is decreasing whenever $I$ is initially positive. The main point of stating this lemma is to make an analogy with Lemma \ref{NondegDerivativelem}. 
 \begin{lem}\label{NondegDerivativeSlem} 
Let $x\ge \gamma/\beta$ and $y>0$, and suppose $S,I$ is the solution of \eqref{SIR} with $S(0)=x$ and $I(0)=y$.  Then 
\be
\dot S(v(x,y))<0. 
\ee
\end{lem}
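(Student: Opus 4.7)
The plan is to exploit the ODE $\dot S = -\beta S I$ directly together with the exponential formula \eqref{ExpFormulae}, which guarantees positivity of $I$. Since $\dot S(t) = -\beta S(t) I(t)$, verifying $\dot S(v(x,y)) < 0$ reduces to showing that both $S(v(x,y))$ and $I(v(x,y))$ are strictly positive at the critical time $v(x,y)$.

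First I would dispose of the boundary case $x = \gamma/\beta$, where by definition $v(x,y) = 0$, so $S(v(x,y)) = x = \gamma/\beta > 0$ and $I(v(x,y)) = y > 0$, giving $\dot S(0) = -\gamma y < 0$ immediately. Next, for $x > \gamma/\beta$, I would use the continuity of $S$ together with $S(0) = x > \gamma/\beta$ and the minimality in the definition \eqref{vExitTimeFun} to conclude that $S(v(x,y)) = \gamma/\beta$ exactly (so in particular $S(v(x,y)) > 0$).

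Finally, because $I(0) = y > 0$, the exponential representation
\[
I(t) = y \exp\left(\int_0^t (\beta S(\tau) - \gamma) d\tau\right)
\]
from \eqref{ExpFormulae} shows $I(v(x,y)) > 0$. Combining these facts gives
\[
\dot S(v(x,y)) = -\beta S(v(x,y)) I(v(x,y)) = -\gamma I(v(x,y)) < 0,
\]
as required.

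There is essentially no obstacle here; the lemma is a one-line consequence of the first SIR equation and the positivity of $I$. Its role in the paper is structural — it plays for $v$ the same transversality role that Lemma \ref{NondegDerivativelem} plays for $u$, ensuring that the implicit function theorem can be applied to $\Phi_1(x,y,v(x,y)) = \gamma/\beta$ in order to establish smoothness of $v$ on $(\gamma/\beta,\infty) \times (0,\infty)$ in the proof of Theorem \ref{vthm}.
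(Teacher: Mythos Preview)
Your proof is correct and follows the same idea as the paper, which in fact omits a formal proof and simply remarks that the assertion ``follows since $S$ is decreasing whenever $I$ is initially positive.'' Your write-up merely makes this remark explicit via $\dot S = -\beta S I$ and the positivity of $S$ and $I$ from \eqref{ExpFormulae}, and your closing observation about the transversality role of the lemma is exactly right.
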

Having established Lemmas \ref{CrudeUpperV} and \ref{NondegDerivativeSlem}, we can now argue virtually the same way we did in the previous section to conclude Theorem \ref{vthm}. 
Consequently, we will omit a proof. 
\begin{figure}[h]
\centering
  \includegraphics[width=.8\textwidth]{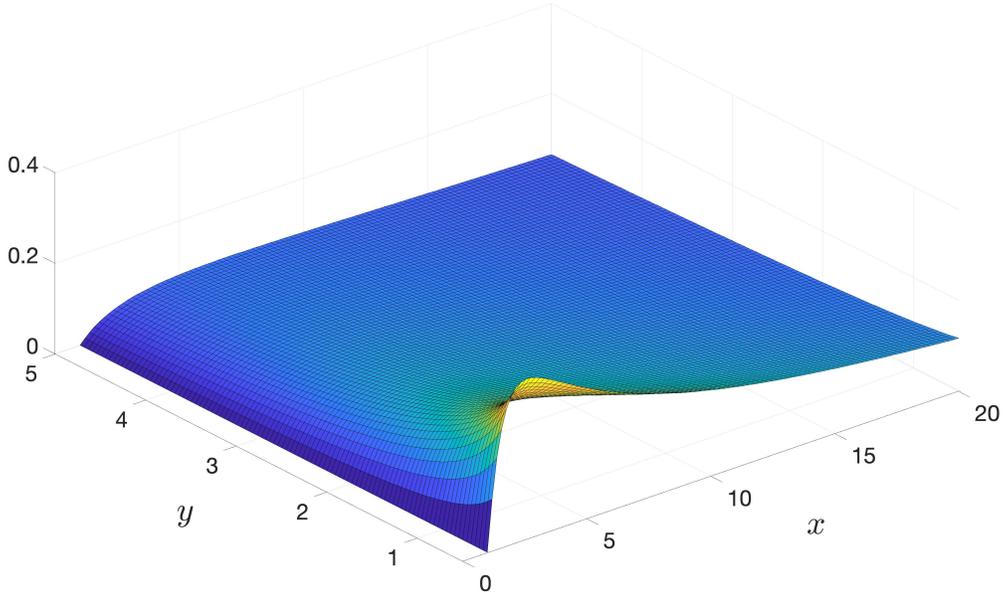}
 \caption{Numerical approximation of the function $v(x,y)$ for $(x,y)\in [1,20]\times[1/2,5]$. Here $\beta=\gamma=3$.}\label{Fig7}
\end{figure}

\section{Asymptotics}\label{AsympSect}
In this final section, we will derive a few estimates on $u(x,y)$ and $v(x,y)$ that we will need to prove Theorem \ref{AsympThm}. First, we record an upper and lower bound on $u$. 
\begin{lem}
If $x\ge 0$ and $y\ge \mu$, then
\be\label{LowerBoundExitTime}
u(x,y)\ge \frac{1}{\gamma}\ln\left(\frac{x+y}{\gamma/\beta+\mu}\right).
\ee
If $x\in [0, \gamma/\beta)$ and $y\ge \mu$, then 
\be\label{UpperBoundExitTime}
u(x,y)\le \frac{\ln(y/\mu)}{\gamma-\beta x}.
\ee
\end{lem}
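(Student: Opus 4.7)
The plan is to establish the two bounds separately by Gronwall-type reasoning applied to different quantities along the trajectory $(S(t),I(t))$.

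\emph{Lower bound.} First I would use $I\le S+I$ to turn the conserved-sum identity into a linear differential inequality:
\[
\frac{d}{dt}(S+I)(t) = -\gamma I(t)\;\ge\; -\gamma\,(S+I)(t).
\]
Dividing by $S+I>0$ and integrating gives the Gronwall-type bound
\[
(S+I)(t) \ge (x+y)\,e^{-\gamma t},\qquad t\ge 0.
\]
Then I would evaluate at $t=u(x,y)$. By definition $I(u)=\mu$, and, at least when $y>\mu$, $I$ crosses $\mu$ from above so $\dot I(u)\le 0$, forcing $\beta S(u)\le \gamma$ (this is essentially Lemma~\ref{NondegDerivativelem}). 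Thus $(S+I)(u)\le \gamma/\beta+\mu$, and combining with the lower bound yields $(x+y)e^{-\gamma u(x,y)} \le \gamma/\beta+\mu$, which rearranges to \eqref{LowerBoundExitTime}. (The degenerate case $y=\mu$ gives $u=0$, in which case the inequality is trivial whenever $x\le \gamma/\beta$.)

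\emph{Upper bound.} Here the trick is to use the initial-time estimate on $S$. Since $S$ is monotone decreasing, $S(t)\le x<\gamma/\beta$ for every $t\ge 0$. Substituting into the $I$-equation:
\[
\dot I(t) = \bigl(\beta S(t)-\gamma\bigr)I(t) \;\le\; -(\gamma-\beta x)\,I(t),
\]
so $(d/dt)\ln I(t)\le -(\gamma-\beta x)$. Integrating from $0$ to $t$ yields $I(t)\le y\,e^{-(\gamma-\beta x)t}$. Choosing $t_0=\ln(y/\mu)/(\gamma-\beta x)$ gives $I(t_0)\le \mu$, so $u(x,y)\le t_0$, which is \eqref{UpperBoundExitTime}.

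Neither direction poses a real obstacle; the main conceptual step is picking the right quantity to estimate exponentially (namely $S+I$ for the lower bound, which one can push up by throwing away a positive term, and $I$ alone for the upper bound, where monotonicity of $S$ gives a constant-coefficient comparison). The only point that requires care is invoking $\dot I(u)\le 0$ at the threshold crossing, which is exactly what is needed to conclude $S(u)\le \gamma/\beta$ and close the lower-bound argument.
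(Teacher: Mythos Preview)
Your proof is correct and takes essentially the same approach as the paper. The paper dresses up the lower bound as a subsolution check---verifying that $w(x,y)=\frac{1}{\gamma}\ln\frac{x+y}{\gamma/\beta+\mu}$ satisfies $\beta xy\,\partial_x w+(\gamma-\beta x)y\,\partial_y w=\frac{y}{x+y}\le 1$ and then integrating $\frac{d}{dt}w(S(t),I(t))\ge -1$---but this is exactly your Gronwall inequality $(S+I)'\ge -\gamma(S+I)$ in different clothing, and the upper bound is likewise the same exponential comparison on $I$ via $S(t)\le x$.
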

\begin{proof}
Set 
$$
w(x,y)=\frac{1}{\gamma}\ln\left(\frac{x+y}{\gamma/\beta+\mu}\right).
$$
Observe that for each $x\ge 0$ and $y\ge \mu$, 
\be
\beta xy \partial_xw+(\gamma-\beta x)y\partial_yw=\frac{1}{\gamma}\frac{\beta xy+(\gamma-\beta x)y}{x+y}=
\frac{y}{x+y}\le 1.
\ee
Now fix  $x\ge 0$ and $y\ge \mu$ and suppose $S,I$ is the solution of \eqref{SIR} with $S(0)=x$ and $I(0)=y$. 
By our computation above, 
$$
\frac{d}{dt}w(S(t),I(t))=-\Big[\beta S(t)I(t)\partial_xw(S(t),I(t))+(\gamma-\beta S(t))I(t)\partial_yw(S(t),I(t))\Big]\ge -1
$$
for $0\le t\le u(x,y)$. And integrating this inequality from $t=0$ to $t=u(x,y)$ gives 
\be\label{WandUIneq}
w(S(u(x,y)),\mu)-w(x,y)\ge -u(x,y).
\ee
Since $S(u(x,y))\le \gamma/\beta$,
$$
w(S(u(x,y)),\mu)=\frac{1}{\gamma}\ln\left(\frac{S(u(x,y))+\mu}{\gamma/\beta+\mu}\right)\le 0.
$$
Combined with \eqref{WandUIneq} this implies $u(x,y)\ge w(x,y)$.  We conclude \eqref{LowerBoundExitTime}.

\par Now suppose $\beta x<\gamma$ and $y>\mu$. By \eqref{ExpFormulae},  
$$
\mu=I(u(x,y))=ye^{\displaystyle\int^{u(x,y)}_0(\beta S(\tau)-\gamma)d\tau}\le ye^{(\beta x-\gamma)u(x,y)}.
$$
Taking the  natural logarithm and rearranging gives \eqref{UpperBoundExitTime}.
\end{proof}

\par Likewise, we can identify convenient upper and lower bounds for $v(x,y)$.  To this end, we will exploit the fact that for each 
$x> \gamma/\beta$ and $y>0$ 
\be\label{geefunction}
g(z)=(\gamma/\beta)\ln z-z+\psi(x,y)
\ee
is  concave on the interval $\gamma/\beta\le z\le x.$ This implies
\be\label{wlowerBound}
g(z)\ge \frac{g(\gamma/\beta)-y}{\gamma/\beta-x}(z-x)+g(x)=\left(\frac{\gamma}{\beta}\frac{\ln x-\ln(\gamma/\beta)}{x-\gamma/\beta}-1\right)(z-x)+y
\ee
and 
\be\label{wupperBound}
g(z)\le g'(x)(z-x)+g(x)=\left(\frac{\gamma}{\beta x}-1\right)(z-x)+y
\ee
for $\gamma/\beta\le z\le x.$ 


\begin{lem}
Suppose $x> \gamma/\beta$ and $y>0$. Then 
\be\label{veeUpperBound}
v(x,y)\le \frac{\ln x-\ln(\gamma/\beta)-\ln y+\ln\left(x-\gamma/\beta+y-\frac{\gamma}{\beta}(\ln x-\ln(\gamma/\beta))\right)}{\beta\left(\displaystyle y+x\left(1-\frac{\gamma}{\beta}\frac{\ln x-\ln(\gamma/\beta)}{x-\gamma/\beta}\right)\right)}
\ee
and 
\be\label{veeLowerBound}
v(x,y)\ge \frac{\ln x-\ln(\gamma/\beta)-\ln y+\ln\left(x-\gamma/\beta+y+\frac{\gamma}{\beta}(\frac{\gamma}{\beta x}-1)\right)}{\beta(\displaystyle x-\gamma/\beta+y)}.
\ee
\end{lem}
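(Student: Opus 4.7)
The plan is to combine the representation formula from Theorem~\ref{vthm}(iii) with the concavity bounds \eqref{wlowerBound} and \eqref{wupperBound} already in hand. Writing
$$v(x,y)=\int_{\gamma/\beta}^{x}\frac{dz}{\beta z\,g(z)}$$
with $g$ as in \eqref{geefunction}, I first observe that $g''(z)=-\gamma/(\beta z^2)<0$, so $g$ is concave on $[\gamma/\beta,x]$. Since $g(x)=y>0$ and the elementary inequality $\ln u\le u-1$ gives $g(\gamma/\beta)\ge y>0$, the function $g$ is strictly positive on the whole interval of integration. Consequently, the linear chord expression in \eqref{wlowerBound} and the linear tangent expression in \eqref{wupperBound} are both positive there, and the inequalities may be safely inverted inside the integrand: the chord bound yields an upper bound on $v$, the tangent bound a lower bound on $v$.

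The next step is to evaluate the resulting integrals. In each case the integrand has the form $\frac{1}{\beta z(Az+B)}$ where $B=y-Ax$. By partial fractions,
$$\int\frac{dz}{z(Az+B)}=\frac{1}{B}\ln\frac{z}{Az+B}+C,$$
so the bound reduces to evaluating $\frac{1}{\beta B}\ln\frac{z}{Az+B}$ at $z=x$ and $z=\gamma/\beta$. At $z=x$ the quantity $Ax+B$ collapses to $y$, and at $z=\gamma/\beta$ it becomes $y-A(x-\gamma/\beta)$, so the bound takes the form
$$\frac{1}{\beta B}\Bigl[\ln\tfrac{x}{y}-\ln\tfrac{\gamma/\beta}{y-A(x-\gamma/\beta)}\Bigr]=\frac{\ln x-\ln(\gamma/\beta)-\ln y+\ln\bigl(y-A(x-\gamma/\beta)\bigr)}{\beta(y-Ax)}.$$

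Finally, I would specialize $A$ in two ways. For \eqref{veeUpperBound} take the chord slope $A=\frac{\gamma}{\beta}\frac{\ln x-\ln(\gamma/\beta)}{x-\gamma/\beta}-1$; then $y-A(x-\gamma/\beta)=x-\gamma/\beta+y-\frac{\gamma}{\beta}(\ln x-\ln(\gamma/\beta))$ and $y-Ax=y+x\bigl(1-\frac{\gamma}{\beta}\frac{\ln x-\ln(\gamma/\beta)}{x-\gamma/\beta}\bigr)$, which is precisely the stated right-hand side. For \eqref{veeLowerBound} take the tangent slope $\tilde A=\gamma/(\beta x)-1$; then $y-\tilde A x$ simplifies to $x-\gamma/\beta+y$, while $y-\tilde A(x-\gamma/\beta)=x-\gamma/\beta+y+\frac{\gamma}{\beta}\bigl(\frac{\gamma}{\beta x}-1\bigr)$, again matching the stated formula.

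I expect no conceptual obstacle; the main work is bookkeeping in the partial-fraction evaluation and verifying that the endpoint expressions reorganize into the exact logarithmic forms in the statement. The positivity of the logarithmic arguments is guaranteed by the positivity of $g(\gamma/\beta)$ and of the tangent line at $z=\gamma/\beta$, which was checked above as part of confirming that both bounds are positive on $[\gamma/\beta,x]$.
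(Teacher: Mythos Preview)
Your proposal is correct and follows essentially the same approach as the paper: both start from the integral representation $v(x,y)=\int_{\gamma/\beta}^{x}\frac{dz}{\beta z\,g(z)}$, bound $g$ below and above by the chord \eqref{wlowerBound} and tangent \eqref{wupperBound} coming from concavity, and then evaluate the resulting integrals via the partial-fraction identity \eqref{partialFractions}. Your extra care in verifying that $g$ and both linear bounds stay positive on $[\gamma/\beta,x]$ is a welcome clarification, but otherwise the argument is the same as the paper's.
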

\begin{proof}
We will appeal to part $(iii)$ of Theorem \ref{vthm} which asserts 
\be
v(x,y)=\int^x_{\gamma/\beta}\frac{dz}{\beta z g(z)}.
\ee
Here $g(z)$ is defined in \eqref{geefunction}.  We will also employ the identity 
\be\label{partialFractions}
\frac{d}{dz}\frac{\ln z-\ln(cz+d)}{d}=\frac{1}{z(cz+d)}.
\ee

\par By \eqref{wlowerBound} and \eqref{partialFractions}, 
\begin{align}
v(x,y)&\le\int^x_{\gamma/\beta}\frac{dz}{\displaystyle\beta z \left(\left(\frac{\gamma}{\beta}\frac{\ln x-\ln(\gamma/\beta)}{x-\gamma/\beta}-1\right)(z-x)+y\right)}\\
&=\left.\frac{\ln z-\ln\left(\left(\displaystyle\frac{\gamma}{\beta}\frac{\displaystyle\ln x-\ln(\gamma/\beta)}{\displaystyle x-\gamma/\beta}-1\right)(z-x)+y\right) }{\beta\displaystyle\left(y+x\left(1-\frac{\gamma}{\beta}\frac{\ln x-\ln(\gamma/\beta)}{x-\gamma/\beta}\right) \right)}\right|^{z=x}_{z=\gamma/\beta}\\
&=\frac{\ln x-\ln(\gamma/\beta)-\ln y+\ln\left(x-\gamma/\beta+y-\frac{\gamma}{\beta}(\ln x-\ln(\gamma/\beta))\right)}{\beta\left(\displaystyle y+x\left(1-\frac{\gamma}{\beta}\frac{\ln x-\ln(\gamma/\beta)}{x-\gamma/\beta}\right)\right)}.
\end{align}
Similarly, \eqref{wupperBound} and \eqref{partialFractions} give 
\begin{align}
v(x,y)&\ge\int^x_{\gamma/\beta}\frac{dz}{\displaystyle\beta z \left(\left(\frac{\gamma}{\beta x}-1\right)(z-x)+y\right)}\\
&=\left.\frac{\ln z-\ln\left(\left(\displaystyle\frac{\gamma}{\beta x}-1\right)(z-x)+y\right) }{\beta(\displaystyle x-\gamma/\beta+y)}\right|^{z=x}_{z=\gamma/\beta}\\
&=\frac{\ln x-\ln(\gamma/\beta)-\ln y+\ln\left(x-\gamma/\beta+y+\frac{\gamma}{\beta}(\frac{\gamma}{\beta x}-1)\right)}{\beta(\displaystyle x-\gamma/\beta+y)}.
\end{align}
\end{proof}
\begin{cor}
For each $\delta>0$, 
\be\label{vBoundedLim}
\lim_{\substack{x+y\rightarrow\infty \\ x\ge0, y\ge \delta}}v(x,y)=0.
\ee
\end{cor}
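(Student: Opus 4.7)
The plan is to invoke the upper bound \eqref{veeUpperBound} and analyze its numerator and denominator as $x+y\to\infty$ with $y\ge\delta$. Since $v(x,y)=0$ whenever $x\le\gamma/\beta$, only the region $x>\gamma/\beta$ requires attention. Writing $r=\gamma/\beta$ and denoting the right-hand side of \eqref{veeUpperBound} by $N(x,y)/D(x,y)$, I first collect some preliminary positivity estimates. The elementary inequality $\ln t\le t-1$, applied with $t=x/r$, gives $r\ln(x/r)\le x-r$ for $x\ge r$, hence $x-r-r\ln(x/r)\ge 0$; consequently the argument of the inner logarithm in $N$ lies between $y$ and $x+y$, while the factor $1-r\ln(x/r)/(x-r)$ appearing in $D$ lies in $[0,1)$. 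Thus $0\le N(x,y)\le \ln(x/r)+\ln((x+y)/y)$ and $D(x,y)\ge \beta y>0$.

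Next I would split the asymptotic analysis into two regimes, according to whether $x$ stays bounded. If $(x_n,y_n)$ is any sequence with $y_n\ge\delta$, $x_n+y_n\to\infty$, and $(x_n)$ bounded, then necessarily $y_n\to\infty$. Along such a sequence $N(x_n,y_n)$ stays bounded (both $\ln(x_n/r)$ and $\ln(1+(x_n-r-r\ln(x_n/r))/y_n)$ remain bounded), whereas $D(x_n,y_n)\ge \beta y_n\to\infty$, so $v(x_n,y_n)\to 0$. If instead $x_n\to\infty$, then $N(x_n,y_n)\le\ln(x_n/r)+\ln(1+x_n/\delta)=O(\ln x_n)$, while $r\ln(x_n/r)/(x_n-r)\to 0$ implies $D(x_n,y_n)\ge\tfrac{\beta}{2}x_n$ for all sufficiently large $n$, giving $v(x_n,y_n)=O((\ln x_n)/x_n)\to 0$.

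Since any sequence $(x_n,y_n)$ satisfying the hypotheses has a subsequence falling into exactly one of the two regimes, a standard subsequence argument then delivers \eqref{vBoundedLim}. The only place where one has to be mildly careful is the transition zone where $x_n$ is neither small nor enormous, but the two regimes together cover every tail behavior, so there is no genuine obstacle beyond bookkeeping; I do not anticipate needing anything beyond the bound \eqref{veeUpperBound} and the one elementary logarithmic inequality.
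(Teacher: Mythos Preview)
Your proposal is correct and follows essentially the same route as the paper: both split into the cases $x$ bounded versus $x\to\infty$, and in the latter case both use \eqref{veeUpperBound} together with $r\ln(x/r)/(x-r)\to 0$ to get a denominator of order $x$ against a numerator of order $\ln x$. The only cosmetic difference is that in the bounded-$x$ case the paper invokes the cruder Lemma~\ref{CrudeUpperV} rather than \eqref{veeUpperBound}, but your direct use of \eqref{veeUpperBound} there works equally well.
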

\begin{proof}
Choose sequences $x_k\ge 0$ and $y_k\ge \delta$ with $x_k+y_k\rightarrow\infty$ such that 
$$
\limsup_{\substack{x+y\rightarrow\infty \\ x\ge \gamma/\beta, y\ge \delta}}v(x,y)=\lim_{k\rightarrow\infty}v(x_k,y_k).
$$
If $x_k\le \gamma/\beta$ for infinitely many $k\in \N$, then $v(x_k,y_k)=0$ for infinitely many $k$ and 
\be\label{vkaygoestozero}
\lim_{k\rightarrow\infty}v(x_k,y_k)=0.
\ee
Otherwise, we may as well suppose that $x_k> \gamma/\beta$ for all $k\in \N$. In this case, \eqref{veeUpperBound} implies
  \be\label{veeUpperBound2}
v(x_k,y_k)\le \frac{\ln x_k-\ln(\gamma/\beta)-\ln y_k+\ln\left(x_k-\gamma/\beta+y_k-\frac{\gamma}{\beta}(\ln x_k-\ln(\gamma/\beta))\right)}{\beta\left(\displaystyle y_k+x_k\left(1-\frac{\gamma}{\beta}\frac{\ln x_k-\ln(\gamma/\beta)}{x_k-\gamma/\beta}\right)\right)}
\ee
for all $k\in \N$.

\par If $x_k\rightarrow \infty$, then 
$$
\frac{\gamma}{\beta}\frac{\ln x_k-\ln(\gamma/\beta)}{x_k-\gamma/\beta}\le \frac{1}{2}
$$
for sufficiently large $k$. It follows that 
\begin{align}
v(x_k,y_k)&\le \frac{\ln x_k-\ln(\gamma/\beta)-\ln y_k+\ln\left(x_k-\gamma/\beta+y_k-\frac{\gamma}{\beta}(\ln x_k-\ln(\gamma/\beta))\right)}{\beta\left(y_k+\frac{1}{2}x_k\right)}\\
&\le \frac{\ln x_k-\ln(\gamma/\beta)+\ln\left(\frac{x_k-\gamma/\beta}{y_k}+1\right)}{\beta\left(y_k+\frac{1}{2}x_k\right)}\\
&\le \frac{\ln x_k-\ln(\gamma/\beta)+\ln\left(\frac{x_k-\gamma/\beta}{\delta}+1\right)}{\beta\left(\delta+\frac{1}{2}x_k\right)}
\end{align}
for all large enough $k$. Therefore, \eqref{vkaygoestozero} holds.  

\par Alternatively, we can pass to a subsequence if necessary and suppose $x_k\le c$ for all $k\in \N$ and $y_k\rightarrow\infty$. Lemma \ref{CrudeUpperV} then gives
\be
v(x_k,y_k)\le \frac{\ln c-\ln(\gamma/\beta)}{\beta y_k}\rightarrow 0.
\ee
As a result, \eqref{vkaygoestozero} holds in all cases.  It follows that 
$$
\limsup_{\substack{x+y\rightarrow\infty \\ x\ge 0, y\ge \delta}}v(x,y)=0,
$$
which in turn implies \eqref{vBoundedLim}.
\end{proof}
We are now ready to prove Theorem \ref{AsympThm} which asserts 
\be\label{uAsymptotic2}
\lim_{\substack{x+y\rightarrow\infty \\ x\ge 0,\;y\ge \mu}}\frac{\displaystyle u(x,y)}{\displaystyle\frac{1}{\gamma}\ln\left(\frac{x+y}{\mu}\right)}=1
\ee
and 
\be\label{vAsymptotic2}
\lim_{\substack{x+y\rightarrow\infty \\ x> \gamma/\beta,\;y> 0}}\frac{\displaystyle \beta(x-\gamma/\beta+y) }{\displaystyle\ln\left[\left(\frac{x}{\gamma/\beta}\right)\left(\frac{x-\gamma/\beta}{y}+1\right)\right]}\cdot v(x,y)=1. 
\ee
\begin{proof}[Proof of \eqref{uAsymptotic2}]
In view of \eqref{LowerBoundExitTime}, 
$$
\liminf_{\substack{x+y\rightarrow\infty \\ x\ge 0\; y\ge \mu}}\frac{\displaystyle u(x,y)}{\displaystyle\frac{1}{\gamma}\ln\left(\frac{x+y}{\mu}\right)}\ge1. 
$$
It follows that $u(x,y)\rightarrow \infty$ as $x+y\rightarrow\infty$ with $x\ge 0$ and $y\ge \mu$. In view of Corollary \ref{vBoundedLim}, we may select $N\in \N$ so large that 
$$
v(x,y)< u(x,y)
$$
for all $x+y\ge N$ with $x\ge 0$ and $y\ge \mu$.  

\par Suppose $x+y\ge N$ with $x\ge 0$ and $y\ge \mu$ and choose a time $t$ such that
$$
v(x,y)<t<u(x,y).
$$
Note that as $t>v(x,y)$,
$$
S(t)<\frac{\gamma}{\beta}.
$$
Here $S,I$ is the solution of the SIR ODE \eqref{SIR} with $S(0)=x$ and $I(0)=y$.  By \eqref{UpperBoundExitTime}, we also have
\begin{align}\label{UpperBoundExitTime22}
u(x,y)&=t+u(S(t),I(t))\nonumber\\
&\le t+\frac{1}{\gamma-\beta S(t)}\ln\left(\frac{I(t)}{\mu}\right)\nonumber\\
&\le t+\frac{1}{\gamma-\beta S(t)}\ln\left(\frac{x+y}{\mu}\right).
\end{align}

\par In addition, we can use \eqref{ExpFormulae} to find
\begin{align}
S(t)&=S(v(x,y))e^{\displaystyle-\beta \int^t_{v(x,y)}I(\tau)d\tau}\\
&=\frac{\gamma}{\beta}e^{\displaystyle-\beta \int^t_{v(x,y)}I(\tau)d\tau}\\
&\le \frac{\gamma}{\beta}e^{-\beta\mu(t-v(x,y))}.
\end{align}
Here we used that $I(\tau)\ge \mu$ when $\tau\le u(x,y)$. Therefore, 
$$
\gamma-\beta S(t)\ge \gamma \left(1-e^{-\beta\mu(t-v(x,y))}\right)
$$
Combining this inequality with \eqref{UpperBoundExitTime22} gives 
\be
u(x,y)\le t + \frac{1}{1-e^{-\beta\mu(t-v(x,y))}}\frac{1}{\gamma}\ln\left(\frac{x+y}{\mu}\right).
\ee
As a result, 
$$
\limsup_{\substack{x+y\rightarrow\infty \\ x\ge 0\;y\ge \mu}}\frac{\displaystyle u(x,y)}{\displaystyle\frac{1}{\gamma}\ln\left(\frac{x+y}{\mu}\right)}\le \frac{1}{1-e^{-\beta\mu t}}. 
$$
We conclude \eqref{uAsymptotic2} upon sending $t\rightarrow\infty$.
\end{proof}
In our closing argument below, we will employ the elementary inequalities 
\be\label{2logInequalities}
\frac{1}{x}\le\frac{\ln x-\ln(\gamma/\beta)}{x-\gamma/\beta} \le \frac{\beta}{\gamma},
\ee
which hold for $x>\gamma/\beta$. They follow as the natural logarithm is concave. 
\begin{proof}[Proof of \eqref{vAsymptotic2}]
By the upper bound \eqref{veeUpperBound},
\begin{align}\label{upperBdVLimitStep}
&\frac{\displaystyle \beta(x-\gamma/\beta+y) }{\displaystyle\ln\left[\left(\frac{x}{\gamma/\beta}\right)\left(\frac{x-\gamma/\beta}{y}+1\right)\right]}\cdot v(x,y)\nonumber\\
&\quad =\frac{\displaystyle \beta(x-\gamma/\beta+y) }{\displaystyle \ln x-\ln(\gamma/\beta)-\ln y+\ln\left(x-\gamma/\beta+y\right)}\cdot v(x,y)\nonumber\\
&\quad \le \frac{\displaystyle \beta(x+y) }{\ln x-\ln(\gamma/\beta)-\ln y+\ln\left(x-\gamma/\beta+y-\frac{\gamma}{\beta}(\ln x-\ln(\gamma/\beta))\right)}\cdot v(x,y)\nonumber\\
&\quad \le \frac{\displaystyle x+y}{\displaystyle y+x\left(1-\frac{\gamma}{\beta}\frac{\ln x-\ln(\gamma/\beta)}{x-\gamma/\beta}\right)}\nonumber\\
&\quad = \frac{\displaystyle 1}{\displaystyle 1-\frac{x}{x+y}\frac{\gamma}{\beta}\left(\frac{\ln x-\ln(\gamma/\beta)}{x-\gamma/\beta}\right)}.
\end{align}
We may select sequences $x_k>\gamma/\beta$ and $y_k> 0$ with $x_k+y_k\rightarrow\infty$ and
\be\label{upperBdVLimitStep2}
\limsup_{\substack{x+y\rightarrow\infty \\ x> \gamma/\beta,\;y> 0}}
\frac{x}{x+y}\frac{\gamma}{\beta}\left(\frac{\ln x-\ln(\gamma/\beta)}{x-\gamma/\beta}\right)=
\lim_{k\rightarrow\infty}
\frac{x_k}{x_k+y_k}\frac{\gamma}{\beta}\left(\frac{\ln x_k-\ln(\gamma/\beta)}{x_k-\gamma/\beta}\right).
\ee

\par If $x_k\rightarrow\infty$, then 
$$
0\le \frac{x_k}{x_k+y_k}\frac{\gamma}{\beta}\left(\frac{\ln x_k-\ln(\gamma/\beta)}{x_k-\gamma/\beta}\right)
\le\frac{\gamma}{\beta} \frac{\ln x_k}{x_k-\gamma/\beta}\rightarrow0.
$$
Alternatively, $x_k$ has a bounded subsequence. Passing to a subsequence if necessary, we may assume that $x_k\le c$ for some constant $c$.  In which case, $y_k\rightarrow\infty$. Employing 
\eqref{2logInequalities}, we find
$$
0\le \frac{x_k}{x_k+y_k}\frac{\gamma}{\beta}\left(\frac{\ln x_k-\ln(\gamma/\beta)}{x_k-\gamma/\beta}\right)
\le\frac{x_k}{x_k+y_k}\le \frac{c}{y_k} \rightarrow0.
$$
It follows that the limit in \eqref{upperBdVLimitStep2} is $0$.  And in view of \eqref{upperBdVLimitStep}, 
$$
\limsup_{\substack{x+y\rightarrow\infty \\ x> \gamma/\beta,\;y> 0}}\frac{\displaystyle \beta(x-\gamma/\beta+y) }{\displaystyle\ln\left[\left(\frac{x}{\gamma/\beta}\right)\left(\frac{x-\gamma/\beta}{y}+1\right)\right]}\cdot v(x,y)\le 1.
$$

\par By the lower bound \eqref{veeLowerBound},
\begin{align}\label{upperLdVLimitStep2}
\frac{\displaystyle \beta(x-\gamma/\beta+y) v(x,y) }{\displaystyle\ln\left[\left(\frac{x}{\gamma/\beta}\right)\left(\frac{x-\gamma/\beta}{y}+1\right)\right]}\nonumber
& =\frac{\displaystyle \beta(x-\gamma/\beta+y)v(x,y) }{\displaystyle \ln x-\ln(\gamma/\beta)-\ln y+\ln\left(x-\gamma/\beta+y\right)}\nonumber\\
&\quad\ge \frac{\ln x-\ln(\gamma/\beta)-\ln y+\ln\left(x-\gamma/\beta+y+\frac{\gamma}{\beta}(\frac{\gamma}{\beta x}-1)\right)}{\displaystyle \ln x-\ln(\gamma/\beta)-\ln y+\ln\left(x-\gamma/\beta+y\right)}
\nonumber\\
&\quad= 1+\frac{\displaystyle\ln\left(\frac{x-\gamma/\beta+y+\frac{\gamma}{\beta}(\frac{\gamma}{\beta x}-1)}{x-\gamma/\beta+y}\right)}{\displaystyle \ln x-\ln(\gamma/\beta)-\ln y+\ln\left(x-\gamma/\beta+y\right)}\nonumber\\
&\quad= 1+\frac{\displaystyle\ln\left(1+\frac{\frac{\gamma}{\beta}(\frac{\gamma}{\beta x}-1)}{x-\gamma/\beta+y}\right)}{\displaystyle \ln\left(\frac{x}{\gamma/\beta}\right)+\ln\left(\frac{x-\gamma/\beta}{y}+1\right)}.
\end{align}
Let us choose sequences $x_k>\gamma/\beta$ and $y_k> 0$ such that $x_k+y_k\rightarrow\infty$ and
\be\label{upperLdVLimitStep3}
\liminf_{\substack{x+y\rightarrow\infty \\ x> \gamma/\beta,\;y> 0}}
\frac{\displaystyle\ln\left(1+\frac{\frac{\gamma}{\beta}(\frac{\gamma}{\beta x}-1)}{x-\gamma/\beta+y}\right)}{\displaystyle \ln\left(\frac{x}{\gamma/\beta}\right)+\ln\left(\frac{x-\gamma/\beta}{y}+1\right)}=
\lim_{k\rightarrow\infty}
\frac{\displaystyle\ln\left(1+\frac{\frac{\gamma}{\beta}(\frac{\gamma}{\beta x_k}-1)}{x_k-\gamma/\beta+y_k}\right)}{\displaystyle \ln\left(\frac{x_k}{\gamma/\beta}\right)+\ln\left(\frac{x_k-\gamma/\beta}{y_k}+1\right)}.
\ee

\par We recall that $\ln(1+z)/z\rightarrow 1$ as $z\rightarrow 0$, which implies 
$$
\ln(1+z)\ge \frac{3}{2}z
$$
for all nonpositive $z$ sufficiently close to $0$. Since
$$
0\ge\frac{\displaystyle\frac{\gamma}{\beta}\left(\frac{\gamma}{\beta x_k}-1\right)}{x_k-\gamma/\beta+y_k}\rightarrow 0,
$$
we then have 
$$
\ln\left(1+\frac{\frac{\gamma}{\beta}(\frac{\gamma}{\beta x_k}-1)}{x_k-\gamma/\beta+y_k}\right)
\ge \frac{3}{2}\frac{\displaystyle\frac{\gamma}{\beta}\left(\frac{\gamma}{\beta x_k}-1\right)}{x_k-\gamma/\beta+y_k}
$$
for all sufficiently large $k\in \N$. 

\par Furthermore,
\begin{align}
0&\ge \frac{\displaystyle \frac{2}{3} \ln\left(1+\frac{\frac{\gamma}{\beta}(\frac{\gamma}{\beta x_k}-1)}{x_k-\gamma/\beta+y_k}\right)}{\displaystyle \ln\left(\frac{x_k}{\gamma/\beta}\right)+\ln\left(\frac{x_k-\gamma/\beta}{y_k}+1\right)}\\
&= \frac{ \frac{\displaystyle\frac{\gamma}{\beta}\left(\frac{\gamma}{\beta x_k}-1\right)}{\displaystyle x_k-\gamma/\beta+y_k}}
{\displaystyle \ln\left(\frac{x_k}{\gamma/\beta}\right)+\ln\left(\frac{x_k-\gamma/\beta}{y_k}+1\right)}\\
&\ge\frac{1}{\displaystyle\ln\left(\frac{x_k}{\gamma/\beta}\right)}\frac{\displaystyle\frac{\gamma}{\beta}\left(\frac{\gamma}{\beta x_k}-1\right)}{x_k-\gamma/\beta+y_k}\\
&=-\frac{1}{x_k}\frac{x_k-\gamma/\beta}{\ln x_k-\ln(\gamma/\beta)}\frac{\gamma/\beta }{x_k-\gamma/\beta+y_k}\\
&\ge -\frac{\gamma/\beta }{x_k-\gamma/\beta+y_k}.
\end{align}
In the last inequality, we used \eqref{2logInequalities}. We conclude the limit in \eqref{upperLdVLimitStep3} is $0$.  In view of inequality \eqref{upperLdVLimitStep2},
$$
\liminf_{\substack{x+y\rightarrow\infty \\ x> \gamma/\beta,\;y> 0}}\frac{\displaystyle \beta(x-\gamma/\beta+y)  }{\displaystyle\ln\left[\left(\frac{x}{\gamma/\beta}\right)\left(\frac{x-\gamma/\beta}{y}+1\right)\right]}\cdot v(x,y)
\ge 1.$$
\end{proof}
\bibliography{SIRbib}{}

\bibliographystyle{plain}

\typeout{get arXiv to do 4 passes: Label(s) may have changed. Rerun}

\end{document}